\title{On Zhu's algebra and $C_2$--algebra for symplectic fermion vertex algebra $SF(d)^+$ }
\date{\today}
\author{Dražen Adamović and Ante Čeperić}
\newcommand{\Addresses}{{
  \bigskip
  \footnotesize

  D. Adamovi\' c, \textsc{ Department of Mathematics,  Faculty of Science, University of Zagreb,  Croatia}\par\nopagebreak
  \textit{E-mail address:} \texttt{adamovic@math.hr}

  \medskip

  A. \v Ceperi\' c, \textsc{Department of Mathematics,  Faculty of Science, University of Zagreb,  Croatia}\par\nopagebreak
  \textit{E-mail address:} \texttt{ante.ceperic@math.hr}

}}
\theoremstyle{plain}
\newtheorem{thm}{Theorem}[section]
\newtheorem{lem}[thm]{Lemma}
\newtheorem{prop}[thm]{Proposition}
\newtheorem{cor}[thm]{Corollary}
\newtheorem{rem}[thm]{Remark}
\numberwithin{equation}{section}
\DeclareMathOperator{\sspan}{span}
\DeclareMathOperator{\End}{End}
\DeclareMathOperator{\im}{im}
\DeclareMathOperator{\gr}{gr}
\DeclareMathOperator{\Sym}{Sym}
\begin{document}
 
\maketitle
 
\begin{abstract}
  In this paper, we study the family of vertex operator algebras $SF(d)^+$,
  known as symplectic fermions. This family is of a particular interest
  because these VOAs are irrational and $C_2$-cofinite.
  We determine the Zhu's algebra $A(SF(d)^+)$ and show that
  the equality of dimensions of $A(SF(d)^+)$ and the $C_2$--algebra
  $\mathcal P(SF(d)^+)$ holds for $d \geq 2$ (the case of $d=1$
  was treated by T. Abe in \cite{Abe}).
  We use these results to prove a conjecture by Y. Arike and K. Nagatomo
  (\cite{AN}) on the dimension of the space of one-point functions
  on $SF(d)^+$.
\end{abstract}
\tableofcontents
 
\section{Introduction}
Symplectic fermions appeared first in physics literature in the papers by H. G. Kausch \cite{Ka00}, H. G. Kausch and M. Gaberdiel \cite{GK96}, \cite{GK99} and in the mathematical literature in the paper by T. Abe \cite{Abe}.
T. Abe proved that symplectic fermion VOAs $SF(d)^+ \ (d \geq 1)$ form a family of irrational $C_2$-cofinite VOAs   at central charge $c=-2d$. Orbifolds of symplectic fermions are studied in detail by T. Creutzig and A. Linshaw \cite{CL}.
 Some CFT aspects of the theory of symplectic fermions, including connection with quantum groups,  were investigated  in \cite{DR}, \cite{GR17}, \cite{R14}.

They are connected to another family of irrational $C_2$-cofinite VOAs,
the family of triplet algebras $\mathcal W(p) \ (p \geq 2)$ (introduced
in \cite{AM} by D. Adamović and A. Milas) by
$$SF(1)^+ \simeq \mathcal W(2).$$

T. Abe in \cite{Abe} proved the following properties of VOA  $SF(d)^+$:
\begin{itemize}
\item  $SF(d)^+$ is  a $C_2$--cofinite irrational VOA;
\item  $SF(d)^+$ has four irreducible modules, and it contains logarithmic modules;
\item $ \displaystyle A(SF(1)^+) \simeq M_2(\mathbb C) \oplus M_2(\mathbb C) \oplus \mathbb C[x]/(x^2) \oplus \mathbb C; $
\item $\dim A(SF(1)^+) = \dim \mathcal P(SF(1)^+)= 11$.
\end{itemize}
Here, $A(V)$ denotes the Zhu's algebra of a VOA $V$, and $\mathcal P(V)$
denotes the $C_2$--algebra (also known as the Poisson algebra) of $V$, both introduced in \cite{Zhu}
by Y. Zhu.

Although the paper \cite{Abe} contains many general structural results on Zhu's algebra $A(SF(d)^+)$, it does not contain a complete description of  $A(SF(d)^+)$ for general rank $d$.
The main goal of our paper is to completely determine the Zhu's algebra $A(SF(d)^+)$ for general rank $d$.

Our method of proof will use the well known fact that for a general VOA $V$ we have
\begin{equation} \label{leq}
  \dim_{\mathbb C} A(V) \leq \dim_{\mathbb C} \mathcal P(V).
\end{equation}

We will show that for $SF(d)^+$, the equality of dimensions in \eqref{leq} holds.
The general problem of determining for which VOAs the equality in \eqref{leq} holds was posed and explored by M. Gaberdiel and T. Gannon in \cite{GG}.

This problem was solved for the family of triplet algebras $\mathcal W(p)$ in \cite{AM11}, and considered for some subalgebras of $\mathcal W(p)$ in \cite{ALM} and \cite{ALM1}.
It is also important to mention papers \cite{FL} and \cite{FFL}, which
consider this problem for some rational VOAs.

To expand on our method, we need some notation:
\begin{align*}
  \mathcal A_d &:= M_{2d}(\mathbb C) \oplus M_{2d}(\mathbb C) \oplus \Lambda^{\text{even}}(V_{2d}) \oplus \mathbb C \\
  n_d &:= \dim_{\mathbb C} \mathcal A_d = 2^{2d-1} + 8d^2 + 1,
\end{align*}
where $V_{2d}$ stands for the standard ($2d$-dimensional) representation of the Lie algebra $\mathfrak{sp}(2d)$.
The main result of this article is:
\begin{thm} \label{uvod1}
  We have $A(SF(d)^+) \simeq \mathcal A_d$ and
  $$\dim_{\mathbb C} A(SF(d)^+) = \dim_{\mathbb C} \mathcal P(SF(d)^+) = n_d.$$
\end{thm}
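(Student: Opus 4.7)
The plan is to prove Theorem \ref{uvod1} by a sandwich argument based on the general inequality \eqref{leq}. Specifically, I would establish the upper bound $\dim \mathcal P(SF(d)^+) \leq n_d$ and independently exhibit a surjective algebra homomorphism $A(SF(d)^+) \twoheadrightarrow \mathcal A_d$. Combined with \eqref{leq}, these force
$$ n_d \leq \dim A(SF(d)^+) \leq \dim \mathcal P(SF(d)^+) \leq n_d, $$
so all three quantities equal $n_d$ and the surjection is an isomorphism between finite-dimensional algebras of the same dimension, giving both conclusions of the theorem simultaneously.

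For the upper bound on $\mathcal P(SF(d)^+)$, I would exploit the fact that $SF(d)$ is freely generated as a vertex superalgebra by $2d$ odd fields, making $\mathcal P(SF(d))$ explicit as a super-polynomial algebra. The orbifold $\mathcal P(SF(d)^+)$ is then spanned by even $\mathbb Z/2$-invariant combinations, dominated by fermion bilinears transforming under the adjoint action of the commuting $\mathfrak{sp}(2d)$-symmetry, together with contributions from higher modes. Both $\mathcal P(SF(d)^+)$ and $\mathcal A_d$ decompose into $\mathfrak{sp}(2d)$-isotypic components, and I would match them piece by piece using classical invariant theory, systematically extracting the relations coming from the vertex-algebra structure to attain the bound $n_d$.

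For the lower bound on $A(SF(d)^+)$, I would assemble the surjection onto $\mathcal A_d$ from the zero-mode actions of appropriate $SF(d)^+$-modules on their top levels. The two $M_{2d}(\mathbb C)$ factors arise from Abe's two irreducible $SF(d)^+$-modules whose top levels realize the standard $\mathfrak{sp}(2d)$-module $V_{2d}$, each giving a surjection $A(SF(d)^+) \twoheadrightarrow \End(V_{2d})$. The $\mathbb C$ factor corresponds to the vacuum module $SF(d)^+$. The $\Lambda^{\mathrm{even}}(V_{2d})$ summand should arise from a non-semisimple logarithmic module, most naturally the projective cover of the vacuum, whose top level carries an $A(SF(d)^+)$-action factoring faithfully through the exterior algebra.

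The main obstacle is matching the $\Lambda^{\mathrm{even}}(V_{2d})$ piece on both sides. On the $\mathcal P$-side one must extract precisely the right relations to cut the relevant $\mathfrak{sp}(2d)$-isotypic components down to total dimension $2^{2d-1}$ without over- or undercounting; on the module side one must exhibit the logarithmic module explicitly and verify that the induced action of $A(SF(d)^+)$ realizes $\Lambda^{\mathrm{even}}(V_{2d})$. Both require careful tracking of $\mathfrak{sp}(2d)$-weights and the signs inherent in the super-structure. Abe's case $d=1$, where $\Lambda^{\mathrm{even}}(V_2) \simeq \mathbb C[x]/(x^2)$, provides a valuable sanity check and suggests which bilinears generate the non-semisimple component in general.
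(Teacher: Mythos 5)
Your skeleton is the paper's: sandwich $\dim A(SF(d)^+)$ between a surjection $A(SF(d)^+)\twoheadrightarrow\mathcal A_d$ (lower bound $n_d$) and a spanning set of $\mathcal P(SF(d)^+)$ of cardinality $n_d$ (upper bound), then invoke \eqref{leq}. But there is a concrete error in the lower bound. You attach the $\mathbb C$ summand of $\mathcal A_d$ to the vacuum module $SF(d)^+$. That cannot work: the top level of the vacuum module is the one-dimensional augmentation quotient of the top level of the logarithmic module $\widehat{SF(d)}^+$ that you use for the $\Lambda^{\text{even}}(V_{2d})$ summand, so $\ker\widehat{\rho^+}\subseteq\ker\rho^+$ and the combined map lands in the proper subalgebra $\{(a,\varepsilon(a))\}\subsetneq\Lambda^{\text{even}}(V_{2d})\oplus\mathbb C$. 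You would lose a dimension and the sandwich would close at $n_d-1$, not $n_d$. The correct fourth module is the $\theta$-twisted module $SF(d)_\theta^+$, whose top level is one-dimensional with $[\omega]$ acting by $-d/8\neq 0$. Relatedly, you never explain why the map into the direct sum of the four endomorphism algebras is onto the \emph{sum} rather than merely onto each factor; the paper settles this with a Chinese-remainder argument, checking pairwise coprimality of the four kernels via the minimal polynomials $x^{d+1}$, $x-1$, $x+d/8$, $x+d/8-1/2$ of $[\omega]$. Some such step is unavoidable in any version of the argument.

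The upper bound is where essentially all of the work lies, and your plan gives no mechanism for it. Note first that $\mathcal P(SF(d)^+)$ is \emph{not} the even part of $\mathcal P(SF(d))\simeq\Lambda(\mathfrak h)$: $C_2(SF(d)^+)$ only contains products $a_{-2}b$ with $a,b$ even, so it is strictly smaller than $C_2(SF(d))\cap SF(d)^+$, and this is exactly why the extra $8d^2+1$ higher-mode bilinears $x^i_{-n}x^j$ ($n\le 5$, plus $e^1_{-7}f^1$) survive. Cutting the surviving space down to $n_d$ requires proving specific relations in $\mathcal P(SF(d)^+)$ --- in the paper, $\overline{e^i_{-6}e^i}=\overline 0$, $\overline{e^i_{-9}f^i}=\overline 0$, the cubic identities $(\overline{h^{ii}-h^{jj}})^3=\overline 0$ and $(\overline{h^{ii}+h^{jj}})^3=12(\overline{h^{ii}+h^{jj}})\cdot\overline{h^{ii}}\cdot\overline{h^{jj}}$, and a case-by-case reduction of length-$4$ monomials, followed by an induction on monomial length. $\mathfrak{sp}(2d)$-equivariance and classical invariant theory reduce the number of cases one must check, but they do not produce these relations; ``systematically extracting the relations'' is precisely the content you have not supplied, so as written the bound $\dim\mathcal P(SF(d)^+)\le n_d$ is asserted rather than proved.
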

The strategy of the proof is:
\begin{enumerate}
\item Using the results from \cite{Abe} on representation theory of $SF(d)^+$, obtain the epimorphism of algebras $\pi: A(SF(d)^+) \to \mathcal A_d$.
\item Find the spanning set of $\mathcal P(SF(d)^+)$
  of cardinality $n_d$.
\end{enumerate}
These two steps (together with the  inequality \eqref{leq}) give us:
$$n_d \leq \dim_{\mathbb C} A(SF(d)^+) \leq \mathcal \dim_{\mathbb C} P(SF(d)^+) \leq n_d,$$
from which we conclude the equality of dimensions together with
the fact that the epimorphism $\pi$ is actually an isomorphism, giving
us the full description of the Zhu's algebra $A(SF(d)^+)$ for general rank
$d$.

We also explore some consequences of having the full description of $A(SF(d)^+)$:
\begin{itemize}

   \item We prove  the Arike-Nagatomo  conjecture (\cite{AN}) about the dimension of a vector space of the one-point functions on $SF(d)^+$.

\item  We show  that the  center of   $A(SF(d)^+)$ is isomorphic to  $\Lambda^{\text{even}}(V_{2d}) \oplus \mathbb C \oplus C \oplus C$ (cf. Corollary \ref{fcor}).  

\item We  prove that the invariant subalgebra  $A(SF(d)^+)^{sp(2d)}$  is    generated only  by $[\omega]$ (cf. Proposition \ref{propinv}). This is quite surprising, since  by \cite{KL}, the invariant vertex algebra  $SF(d)^{sp(2d)}$ is isomorphic to  $\mathcal W_{-d-1/2} (sp(2n), f_{prin})$, and its structure is therefore more complicated.
\end{itemize}

\subsection*{Future work}
 Symplectic fermions belong to a class of vertex operator algebras related to the higher rank logarithmic CFT. Other higher rank analogues of the triplet algebras are vertex algebras $\mathcal W(p)_Q$ introduced in \cite{FT} and studied in \cite{AM14},  \cite{Su}. We believe that for $\mathcal W(p)_Q$ we have the equality of dimensions of Zhu's algebra and $C_2$--algebra.
 
We should mention that it is not easy to find examples of logarithmic
VOAs when $\dim_{\mathbb C} A(V) < \dim_{\mathbb C} \mathcal P(V)$.
It seems that such example is $c = 0$ triplet vertex algebra (cf. \cite{AM10} and \cite[Remark 4]{AM11}), but this is still a conjecture.
We hope to study this problem in our forthcoming publications.
\newpage

\section{Main definitions}

We recall the definitions of Zhu's algebra and $C_2$--algebra of a vertex operator algebra $(V, Y, \mathbf 1, \omega)$ (following \cite{Zhu}).
We will write the mode expansion of a vertex operator associated to $a \in V$ as
$$Y(a,z) = \sum_{n \in \mathbb Z} a_n z^{-n-1}. $$
As usual, for the Virasoro element $\omega$ we will write
$$Y(\omega, z) =  \sum_{n \in \mathbb Z} \omega_n z^{-n-1} = \sum_{n \in \mathbb Z} L(n) z^{-n-2}.$$
Per definition of a vertex operator algebra, we have a $L(0)$-grading
$$V = \bigoplus_{n \in \mathbb Z_{\geq 0}} V_n. $$
We will refer to $a \in V_n$ as homogeneous elements and write $\deg a = n$.

To give a definition of the Zhu's algebra, we define two bilinear maps $*: V \times V \to V, \circ : V
\times V \to V$ in the following way:
for homogeneous $a,b \in V$, we put
\begin{align*}
  a * b &= \Res_z \frac{(1+z)^{\deg a}}{z} Y(a,z)b = \sum_{k \geq 0} {\deg a \choose k} a_{-k-1}b\\
  a \circ b &= \Res_z \frac{(1+z)^{\deg a}}{z^2} Y(a,z)b = \sum_{k \geq 0} {\deg a \choose k} a_{-k-2}b.
\end{align*}
We can extend these operations bilinearly to $V$, and denote by $O(V) \subseteq V$ the linear span of elements of the form $a \circ b$.
Denote $A(V):= V/O(V)$. By \cite{Zhu}, this space has a unital associative
algebra structure (with multiplication induced by $*$ and the unit $1 + O(V)$). This algebra is called the Zhu's algebra of $V$ and we will write
$$[a] = a + O(V). $$
It is well known that we can equip $A(V)$ with an increasing filtration
$$F^nA(V) = \left( \bigoplus_{k = 0}^n V_k  \right) \oplus O(V) $$
that gives $A(V)$ the filtered algebra structure.

For $C_2$--algebra, we let
$$C_2(V) = \sspan_{\mathbb C} \{ a_{-2}b : a, b \in V \}, \ \mathcal P(V) = V/C_2(V). $$
We will denote $\overline{a} = a + C_2(V)$.
By \cite{Zhu}, this quotient space $\mathcal P(V)$ has a structure of
a graded commutative Poisson algebra with
\begin{align*}
  \overline{a} \cdot \overline{b} &= \overline{a_{-1}b} \\
  \{\overline{a}, \overline{b} \} &= \overline{a_{0}b},
\end{align*}
and a grading induced by the $L(0)$-grading. It is well known (see Chapter 2 of \cite{Abe}) that we have a natural epimorphism of unital algebras
\begin{equation} \label{eq:epi}
  \gr A(V) \to \mathcal P(V),
\end{equation}
which gives us the inequality of dimensions
\begin{equation}
  \label{eq:ineq}
  \dim_{\mathbb C} A(V) \geq \dim_{\mathbb C} \mathcal P(V).
\end{equation}

It is also important to mention the connection of $V$-modules to
the $A(V)$-modules.
Let $M$ be a weak $V$-module and let
$$\Omega(M) := \{m \in M : a_n m = 0 \text{ for } a \in V_k, n > k-1.\} $$

For the admissible and simple $M$, one can show that $\Omega(M)$ is
exactly the top component of $M$ (see Proposition 3.4. in \cite{DLM}).

\begin{prop}\cite{Zhu}
  Let $V$ be a VOA.
  \begin{enumerate}
  \item Let $M$ be a weak $V$-module. The linear map
    $$o : V \to \End(\Omega(M)), \ o(a) = a_{\deg a - 1}$$
    induces a representation of $A(V)$ on $\Omega(M)$.
  \item If $M$ is an irreducible $V$-module, then $\Omega(M)$ is
    irreducible $A(V)$-module.
  \item The map $M \to \Omega(M)$ is a bijection from the set of
    inequivalent simple admissible $V$-modules to the set of
    inequivalent simple $A(V)$-modules.
  \end{enumerate}
\end{prop}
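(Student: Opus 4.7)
The plan is to prove the three assertions in order, and the engine throughout is the pair of identities
\begin{align*}
[a_m, b_n] &= \sum_{i \geq 0} \binom{m}{i} (a_i b)_{m+n-i}, \\
(a_m b)_n &= \sum_{i \geq 0} (-1)^i \binom{m}{i}\bigl(a_{m-i} b_{n+i} - (-1)^m b_{m+n-i} a_i\bigr),
\end{align*}
applied in combination with the defining vanishing property $c_n w = 0$ for homogeneous $c \in V$, $w \in \Omega(M)$, and $n > \deg c - 1$.

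For (1), I would extend $o$ linearly to $V$ and then verify, for homogeneous $a, b \in V$ and $w \in \Omega(M)$, both that $o(a \circ b) w = 0$ (so that $o$ descends to $A(V) = V/O(V)$) and that $o(a * b) = o(a) o(b)$ (so that the descent is a ring homomorphism). Writing out each mode $(a_{-k-2} b)_n$ or $(a_{-k-1} b)_n$ via the associativity formula and using the vanishing condition to kill terms in which a creation operator pushes a vector out of $\Omega(M)$, all contributions collapse: for $a \circ b$ the binomial sum telescopes to zero, while for $a * b$ the only surviving piece is the composition $a_{\deg a - 1} b_{\deg b - 1}$.

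For (2), let $0 \neq W \subseteq \Omega(M)$ be an $A(V)$-submodule and let $N$ be the $V$-submodule of $M$ it generates. Irreducibility of $M$ forces $N = M$; a standard spanning argument expresses each vector of $M$ as a sum of modes $a_n w$ with $w \in W$, and restricting to the top component identifies the zero-mode action with the $A(V)$-action, so $\Omega(M) \subseteq A(V) \cdot W = W$. For (3), I would construct an inverse $U \mapsto L(U)$ from simple $A(V)$-modules to simple admissible $V$-modules by forming a generalized Verma-type induced module whose degree-zero part is $U$ and passing to its unique simple quotient; bijectivity then amounts to checking $\Omega(L(U)) \cong U$ as $A(V)$-modules and $L(\Omega(M)) \cong M$ as $V$-modules.

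The main obstacle is the combinatorial bookkeeping in (1): the two associativity-formula manipulations require delicate handling of alternating sums of binomial coefficients together with the vanishing property of $\Omega(M)$. The construction of $L(U)$ in (3) is also non-trivial, as one must verify that the induced module does not collapse, i.e.\ that all relations of $V$ survive the passage from $A(V)$ to the induced module.
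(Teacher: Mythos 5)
The paper gives no argument for this proposition—it is quoted directly from \cite{Zhu}—and your sketch reproduces the standard proof from that source and from \cite{DLM}: the commutator/iterate formulas plus the vanishing property of $\Omega(M)$ for part (1), the submodule-generation and top-component argument for part (2), and the generalized Verma construction $U \mapsto L(U)$ with its simple quotient for part (3). Your plan is correct and follows essentially the same route as the cited references, with the difficulties (the binomial bookkeeping in (1), the non-collapse of the induced module in (3)) correctly identified.
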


\newpage
\section{VOA of symplectic fermions}
\subsection{Definitions and basic results}
The VOA of symplectic fermions was defined in \cite{Abe}, and we follow the notation given there. 
First, one considers the symplectic vector space $\mathfrak h$ of dimension
$2d$ with the canonical basis $e^1, e^2, \ldots, e^d, f^1, f^2, \ldots, f^d$
such that
$$\langle e^i, e^j \rangle = \langle f^i, f^j \rangle = 0, \text{ and } \langle e^i, f^j \rangle = - \delta_{i,j}.$$
The subgroup of $GL(\mathfrak h)$ preserving the symplectic
form is the symplectic group $Sp(2d)$.
One can construct a Lie superalgebra and a vertex operator superalgebra from $\mathfrak h$ in a manner that is similar to the construction of Heisenberg VOA.
We denote this VOSA $SF(d)$.
The easiest way to describe it is by saying that it is
strongly and freely generated by the odd generators $e^1, e^2, \ldots, e^d, f^1, f^2, \ldots, f^d$, with the $\lambda$-bracket
$$[e^i_\lambda e^j] = [f^i_\lambda f^j] = 0, \text{ and } [e^i_\lambda f^j] = - \lambda \delta_{i,j} $$
and the Virasoro vector given by $\omega = \sum_{i = 1}^d e^{i}_{-1}f^i. $
One can easily see that we have
$$L(0)e^i = e^i, \ L(0)f^i = f^i, $$
and that we can identify $SF(d)_1$ with $\mathfrak h$.

Now, the VOA of symplectic fermions $SF(d)^+$ is the even part
of the VOSA $SF(d)$. In \cite{Abe} it is shown that both of these algebras are simple and $C_2$-cofinite, and that the following vectors (for $1 \leq i, j \leq d$):
\begin{align*}
  e^{ij} &= e^i_{-1}e^j, \\
  f^{ij} &= f^i_{-1}f^j, \\
  h^{ij} &= e^i_{-1}f^j, \\
  E^{ij} &= \frac{1}{2} \left( e^i_{-2}e^j + e^j_{-2}e^i \right), \\
  F^{ij} &= \frac{1}{2} \left( f^i_{-2}f^j + f^j_{-2}f^i \right), \\
  H^{ij} &= \frac{1}{2} \left( e^i_{-2}f^j + f^j_{-2}e^i \right)
\end{align*}
strongly generate $SF(d)^+$.
Notice that we have
\begin{align*}
  e^{ij} &= - e^{ji}, & f^{ij} &= - f^{ji}, \\
  E^{ij} &= E^{ji}, & F^{ij} &= - F^{ji}.
\end{align*}

In context of our work, it is also important to mention the following
result.

\begin{thm}[\cite{Abe}, \cite{AM11}]
  We have
  $$A(SF(1)^+) \simeq M_2(\mathbb C) \oplus M_2(\mathbb C) \oplus \mathbb
  C[x]/(x^2) \oplus \mathbb C.$$
  Moreover,
  $$\dim A(SF(1)^+) = \dim \mathcal P(SF(1)^+) = 11. $$
\end{thm}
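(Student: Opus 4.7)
The plan is to mimic the three-step strategy of the paper specialised to $d=1$: (1) construct an algebra epimorphism
$$
\pi \colon A(SF(1)^+) \twoheadrightarrow M_2(\mathbb C) \oplus M_2(\mathbb C) \oplus \mathbb C[x]/(x^2) \oplus \mathbb C,
$$
giving $\dim A(SF(1)^+) \geq 11$; (2) exhibit a spanning set of $\mathcal P(SF(1)^+)$ of cardinality $11$, giving $\dim \mathcal P(SF(1)^+) \leq 11$; and (3) squeeze via the general inequality \eqref{leq} to conclude that both dimensions equal $11$ and that $\pi$ is an isomorphism.

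For step (1), I would apply Zhu's correspondence between simple admissible $V$-modules and simple $A(V)$-modules to the representation theory of $SF(1)^+$ worked out in \cite{Abe}. There are four irreducible $SF(1)^+$-modules; two of their top components are two-dimensional and produce the two copies of $M_2(\mathbb C)$ (with matrix units read off from the action of $[E^{11}], [F^{11}], [H^{11}]$), while the other two tops are one-dimensional and account for the summand $\mathbb C$ and for the simple quotient of $\mathbb C[x]/(x^2)$. The non-semisimple block $\mathbb C[x]/(x^2)$ itself is detected by the logarithmic $SF(1)^+$-module from \cite{Abe}, whose top is a two-dimensional $[\omega]$-Jordan block. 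Packaging these four actions yields $\pi$, and surjectivity is immediate since each block is already hit by images of the strong generators.

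For step (2), I would work directly inside $\mathcal P(SF(1)^+)$. Since $SF(1)^+$ is strongly generated by $\omega, E^{11}, F^{11}, H^{11}$, the commutative algebra $\mathcal P(SF(1)^+)$ is generated by $\overline\omega, \overline{E^{11}}, \overline{F^{11}}, \overline{H^{11}}$; crucially, although $E^{11} = e^1_{-2}e^1$ lies in $C_2(SF(1))$, the odd fermions $e^1, f^1$ are not themselves elements of $SF(1)^+$, so the class $\overline{E^{11}}$ need not vanish in $\mathcal P(SF(1)^+) = SF(1)^+/C_2(SF(1)^+)$. Combining the $\mathfrak{sp}(2)$-symmetry acting on these generators, the Poisson bracket induced by the zero modes, and the low-weight singular vectors of the vacuum module (those responsible for $C_2$-cofiniteness), I would compute the surviving homogeneous components weight-by-weight and sum to $n_1 = 11$.

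The main obstacle is step (2): obtaining a \emph{tight} upper bound on $\dim \mathcal P(SF(1)^+)$ requires identifying every relation, not merely enough for surjectivity, and in practice this means harvesting all the relevant singular vectors of low weight in the vacuum module. Once that bound is secured, the chain
$$
11 \leq \dim A(SF(1)^+) \leq \dim \mathcal P(SF(1)^+) \leq 11
$$
collapses and forces $\pi$ to be the claimed isomorphism.
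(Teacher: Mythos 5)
Your three-step plan is precisely the strategy this paper runs for general $d$ (note the paper does not reprove the $d=1$ statement itself --- it is quoted from \cite{Abe} and \cite{AM11} --- but its proof of Theorem \ref{uvod1} is your outline for arbitrary rank, and your identification of which module produces which block matches Subsections \ref{subsec:rep}--\ref{subsec:epi}). However, as written the proposal has two genuine gaps. The first is in step (1): ``surjectivity is immediate since each block is already hit'' is not an argument. A ring homomorphism $R \to R/I_1 \oplus \cdots \oplus R/I_4$ can be surjective onto every factor without being surjective onto the direct sum; you need the four kernels to be pairwise comaximal. The paper obtains this from the generalized Chinese remainder theorem (Theorem \ref{KTOO}) together with the observation that the central element $[\omega]$ satisfies pairwise coprime polynomials on the four relevant top components --- for $d=1$ these are $x^2$, $x-1$, $x+\tfrac{1}{8}$ and $x-\tfrac{3}{8}$. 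Without some such separation argument the lower bound $\dim_{\mathbb C} A(SF(1)^+) \geq 11$, and hence the whole squeeze, is not established.

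The second gap is that step (2), which is where essentially all of the content lies, is only announced. Your observation that $\overline{E^{11}}$ need not vanish because $e^1 \notin SF(1)^+$ is correct and important, but ``combine the symmetry, the bracket and the singular vectors and sum weight-by-weight'' is a statement of intent. Concretely one needs the relations \eqref{bmn1}, \eqref{bmn2}, \eqref{efef} together with $\overline{e^1_{-6}e^1} = \overline{0}$ and $\overline{e^1_{-9}f^1} = \overline{0}$ (exactly the relations the paper imports from Abe's proof of his Proposition 3.12), plus the iteration step \eqref{eq:m21}--\eqref{eq:m22}, to reduce every length-two monomial to the nine classes $\overline{x^i_{-2}x^j}$ $(i \leq j)$, $\overline{e^1_{-3}f^1}$, $\overline{x^i_{-4}x^j}$ $(i \leq j)$, $\overline{e^1_{-5}f^1}$, $\overline{e^1_{-7}f^1}$; together with $\overline{\mathbf 1}$ and $\overline{e^1_{-1}f^1}$ this yields the $11$-element spanning set (the analogue of Theorem \ref{Bd}; for $d=1$ the longer monomials vanish or reduce trivially since $e^1_{-1}e^1_{-1} = 0$). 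Until those relations are actually harvested, the bound $\dim_{\mathbb C} \mathcal P(SF(1)^+) \leq 11$ is unproved. You correctly flag this as the main obstacle, but flagging it does not close it.
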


To compare with Theorem \ref{uvod1}, notice that for two-dimensional
$V_2$ we have
$$\Lambda^{\text{even}}(V_2) \simeq \mathbb C[x]/(x^2). $$
\subsection{Automorphisms and derivations of $SF(d)$} \label{subsec:auto}

In this subsection we recall Abe's results on the automorphisms of $SF(d)$.
Denote by $\theta$ the parity operator on $SF(d)$ (the operator acting
as $\pm 1$ on $SF(d)^\pm$).

\begin{thm}[\cite{Abe}] \label{abeautomorfizmi}
  The automorphism groups of $SF(d)$ and $SF(d)^+$ are $Sp(2d, \mathbb C)$
  and $Sp(2d, \mathbb C)/ \langle \theta \rangle$, respectively.
\end{thm}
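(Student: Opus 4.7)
The plan is to build a natural homomorphism $\Phi \colon Sp(2d,\mathbb C) \to \Aut(SF(d))$, show it is an isomorphism, and then derive the statement for $SF(d)^+$ by restriction.

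For the construction of $\Phi$, I would use that $SF(d)$ is strongly and freely generated by the $2d$-dimensional odd subspace $\mathfrak h = SF(d)_1$. By a reconstruction-type argument, a linear map $g \colon \mathfrak h \to \mathfrak h$ extends uniquely to an endomorphism of $SF(d)$ provided it preserves the defining $\lambda$-bracket relations; since the only nonzero bracket is $[e^i_\lambda f^j] = -\lambda \delta_{i,j}$, this holds exactly when $g \in Sp(2d,\mathbb C)$. Injectivity of $\Phi$ is immediate since $\mathfrak h$ generates $SF(d)$. For surjectivity, any $\varphi \in \Aut(SF(d))$ fixes $\omega$, hence preserves the $L(0)$-grading and restricts to a linear automorphism of $SF(d)_1 = \mathfrak h$; the identity $(e^i)_1 f^j = -\delta_{i,j}\mathbf 1$ is intrinsic to the vertex algebra structure and is preserved by $\varphi$, forcing $\varphi|_{\mathfrak h} \in Sp(2d,\mathbb C)$ and $\varphi = \Phi(\varphi|_{\mathfrak h})$.

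For $SF(d)^+$, restriction yields a homomorphism $R \colon \Aut(SF(d)) \to \Aut(SF(d)^+)$. Its kernel consists of those $g \in Sp(2d,\mathbb C)$ acting trivially on every strong generator of $SF(d)^+$; on the quadratic generators $e^i_{-1}e^j$, $f^i_{-1}f^j$, $e^i_{-1}f^j$ the induced action is that of $g \otimes g$ on $\mathfrak h \otimes \mathfrak h$, which forces $g = \pm I$. Hence $\ker R = \langle \theta \rangle$.

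The main obstacle is the surjectivity of $R$, i.e.\ lifting an arbitrary $\varphi \in \Aut(SF(d)^+)$ to an automorphism of $SF(d)$. My approach is to analyse the induced action on the weight-two subspace: using quasi-symmetry $a_{-1}b + b_{-1}a = 0$ for odd $a,b \in \mathfrak h$ one identifies $SF(d)^+_2 \simeq \Lambda^2 \mathfrak h$ as $Sp(2d,\mathbb C)$-modules. I would then show that $\varphi|_{SF(d)^+_2}$ must be of the form $\Lambda^2 g$ for some $g \in Sp(2d,\mathbb C)$; since $g \mapsto \Lambda^2 g$ has kernel $\{\pm I\}$, such $g$ is determined up to $\theta$ and provides the required lift via $\Phi$. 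The delicate step is verifying that $\varphi|_{SF(d)^+_2}$ genuinely lies in the image of $\Lambda^2$; I would attack this by exploiting further invariants preserved by $\varphi$ (in particular the Virasoro vector $\omega \in SF(d)^+_2$, together with the Poisson structure on $\mathcal P(SF(d)^+)$), which should pin down the $Sp(2d,\mathbb C)$-module structure of $SF(d)^+_2$ strongly enough to force $\varphi$ to act via some $\Lambda^2 g$.
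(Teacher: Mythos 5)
This theorem is quoted from \cite{Abe}; the paper under review gives no proof of its own, so your sketch can only be measured against Abe's original argument. Your first three steps are sound: the extension of $g\in Sp(2d,\mathbb C)$ to an automorphism of $SF(d)$ via free generation, the surjectivity of $\Phi$ from preservation of the $L(0)$-grading and of the intrinsic pairing $(e^i)_1f^j=-\delta_{ij}\mathbf 1$, and the identification $\ker R=\langle\theta\rangle$ are all essentially correct and standard. (One caveat on the kernel: the weight-two generators you list span only $\Lambda^2\mathfrak h$, and for $d=1$ that space is $\mathbb C\omega$, on which all of $Sp(2,\mathbb C)$ acts trivially; to force $g=\pm I$ uniformly you must also use the weight-three generators $E^{ij},F^{ij},H^{ij}$, which span $\Sym^2\mathfrak h$.)

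The genuine gap is the surjectivity of $R$, which is the actual content of the statement about $SF(d)^+$ and which your sketch only restates as a hope. The proposed mechanism --- ``pin down the $Sp(2d,\mathbb C)$-module structure of $SF(d)^+_2$'' --- cannot work as phrased: an arbitrary $\varphi\in\Aut(SF(d)^+)$ is not assumed to intertwine the $Sp(2d,\mathbb C)$-action, so knowing $SF(d)^+_2\simeq\Lambda^2\mathfrak h$ as a module places no constraint on $\varphi$ whatsoever. What is needed is an \emph{intrinsic} algebraic structure on the low-weight subspaces whose full automorphism group is exactly $Sp(2d,\mathbb C)/\{\pm I\}$; in \cite{Abe} this is achieved by computing the products $u_nv$ among the weight-two and weight-three generators (the Griess-type product and the invariant bilinear form) and determining the automorphisms of the resulting finite-dimensional algebra --- that computation is the real work and is absent here. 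Moreover, even granting $\varphi|_{SF(d)^+_2}=\Lambda^2g$, you would still have to show that $\Phi(g)^{-1}\circ\varphi=\mathrm{id}$ on all of $SF(d)^+$: since $SF(d)^+$ is strongly generated by weight-two \emph{and} weight-three elements, this requires either controlling $\varphi$ on $SF(d)^+_3\supseteq\Sym^2\mathfrak h$ as well, or proving that $SF(d)^+$ is generated as a vertex algebra by $SF(d)^+_2$. Neither point is addressed, so the sketch does not yet constitute a proof.
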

The important automorphisms to mention are the permutation automorphisms
$$e^i \mapsto e^{\sigma(i)}, f^i \mapsto f^{\sigma(i)}, \text{ for } \sigma \in S_d$$
($S_d$ is here denoting the symmetric group on $d$ letters) and automorphisms $\tau_i, i = 1, \ldots, d$ defined by
$$\tau_i(e^i) = - f^i, \tau_i(f^i) = e^i, \tau_i(e^j) = e^j, \tau_i(f^j) = f^j, $$
for $j = 1, \ldots, d, j \neq i$.

Mostly, we will prefer to use the Lie algebra $\mathfrak{sp}(2d)$, which acts
on $SF(d)$ with derivations. We will need to use some specific
elements of this Lie algebra, so we need to write down one particular
basis of $\mathfrak{sp}(2d)$.
We will use the notation from \cite{FH}.
Let $1 \leq i, j \leq d, i \neq j$:
\begin{align*}
  H_i e^k &= \delta_{ik} e^i, & H_i f^k &= - \delta_{ik} f^i \\
  X_{ij}e^k &= \delta_{jk} e^i, &  X_{ij}f^k &= - \delta_{ik} f^j,\\
  Y_{ij}e^k &= 0,  & Y_{ij}f^k &= \delta_{jk} e^i + \delta_{ik}e^j,\\
  Z_{ij}e^k &= \delta_{jk} f^i + \delta_{ik}f^j, & Z_{ij}f^k &= 0, \\
  U_{i}e^k &= 0, & U_{i}f^k &= \delta_{ik} e^i,\\
  V_{i}e^k &= \delta_{ik} f^i, & V_{i}f^k &= 0.
\end{align*}

\subsection{Representations of $SF(d)^+$} \label{subsec:rep}

In this subsection, we will recall Abe's results on the representations
of $SF(d)^+$.
In \cite{Abe}, the $\theta$-twisted $SF(d)$-module $SF(d)_\theta$ is defined.

\begin{thm}[\cite{Abe}] \label{abeglavni}
  The list $\{SF(d)^\pm, SF(d)_\theta^\pm\}$ gives a complete list of inequivalent irreducible $SF(d)^+$ modules.
\end{thm}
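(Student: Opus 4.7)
The plan is to follow the standard orbifold strategy, classifying all irreducible $SF(d)^+$-modules by first classifying irreducible (twisted and untwisted) $SF(d)$-modules and then decomposing them under the action of $\theta$. Since $SF(d)^+$ is the fixed-point subalgebra of $SF(d)$ under the order-two automorphism $\theta$, the general theory (cf.\ Miyamoto--Tanabe, Dong--Li--Mason, Dong--Ryba--Xu) tells us that every irreducible $SF(d)^+$-module arises as a $\theta$-isotypic component of an irreducible $g$-twisted $SF(d)$-module for some $g \in \langle \theta \rangle$, so it suffices to carry out two classification problems.

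First, I would classify the irreducible \emph{untwisted} $SF(d)$-modules. The free-field construction of $SF(d)$ realizes its mode algebra as an infinite-dimensional Clifford-type (symplectic-fermion) algebra, whose unique irreducible lowest-weight representation with nontrivial lowest component is $SF(d)$ itself. A standard Zhu-algebra or vacuum argument (using that $SF(d)$ is simple and that $SF(d)_1 = \mathfrak h$ generates a free module over the creation operators) shows that $SF(d)$ is the only irreducible untwisted $SF(d)$-module, up to isomorphism. Second, I would classify the irreducible $\theta$-\emph{twisted} $SF(d)$-modules. Here the $\theta$-twisted mode algebra has integer-shifted modes (so the zero modes $e^i_0, f^i_0$ act nontrivially) and its representation theory reduces to representations of a finite-dimensional Clifford algebra on the bottom component, which has a unique irreducible; extending to a lowest-weight twisted module produces $SF(d)_\theta$ as the unique irreducible $\theta$-twisted module.

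Once these two classifications are in hand, $\theta$ acts on each of $SF(d)$ and $SF(d)_\theta$ as a parity operator, and on each one the $\pm 1$ eigenspaces are nonzero: for $SF(d)$ this is immediate from the decomposition $SF(d) = SF(d)^+ \oplus SF(d)^-$, and for $SF(d)_\theta$ one checks that the bottom Clifford representation is $\mathbb{Z}/2\mathbb{Z}$-graded and that both parities are nonzero (the grading is inherited from the canonical $\theta$-lift). By the general orbifold decomposition theorem, each $\theta$-eigenspace $SF(d)^\pm$, $SF(d)_\theta^\pm$ is then irreducible as an $SF(d)^+$-module, and two such eigenspaces coming from different $SF(d)$-modules or different parities are inequivalent (distinguished by their conformal weight spectrum modulo $\mathbb{Z}$ and by the parity of the lowest component). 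Combined with $C_2$-cofiniteness of $SF(d)^+$, which guarantees finiteness of the irreducible module set, this gives the four listed modules as a complete irredundant list.

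The main technical obstacle is the classification of irreducible $\theta$-twisted $SF(d)$-modules and the verification that $\theta$ acts with both eigenvalues on $SF(d)_\theta$, so that $SF(d)_\theta^+$ and $SF(d)_\theta^-$ are both nonzero irreducible $SF(d)^+$-modules. All of the remaining ingredients (existence of $SF(d)_\theta$, simplicity, the orbifold decomposition dictionary) either appear in Abe's setup or follow from general principles for $\mathbb{Z}/2\mathbb{Z}$-orbifolds of free-field vertex superalgebras.
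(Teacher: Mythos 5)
This statement is quoted from Abe's paper \cite{Abe}; the present paper gives no proof of it, so the comparison is really with Abe's argument. Abe does \emph{not} use the orbifold/twisted-module dictionary you propose. He works directly with the Zhu algebra: he derives enough relations in $A(SF(d)^+)$ (from the singular-vector-type identities among the strong generators $e^{ij}, f^{ij}, h^{ij}, E^{ij}, F^{ij}, H^{ij}$) to show that every irreducible $A(SF(d)^+)$-module is isomorphic to one of the four top components listed in Section 3.3, and then invokes the Zhu bijection $M \mapsto \Omega(M)$ recorded in Proposition 2.1 to lift this to a classification of irreducible $SF(d)^+$-modules. This is precisely the structure the present paper relies on when it builds the epimorphism $\pi: A(SF(d)^+) \to \mathcal A_d$ in Subsection \ref{subsec:epi}.

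Your route has a genuine gap at its central step. The assertion that ``every irreducible $SF(d)^+$-module arises as a $\theta$-isotypic component of an irreducible $g$-twisted $SF(d)$-module'' is exactly the hard part of the theorem, and the references you gesture at do not cover this situation: the classical completeness results of Dong--Li--Mason and Miyamoto--Tanabe require rationality of the orbifold, while $SF(d)^+$ is irrational (this irrationality is one of the main points of \cite{Abe} and of this paper), and the more recent $C_2$-cofinite orbifold machinery postdates Abe and would itself need its hypotheses verified for a vertex \emph{super}algebra with the parity involution. Quantum Galois theory does give you irreducibility and pairwise inequivalence of the eigenspaces $SF(d)^\pm$, $SF(d)_\theta^\pm$, but not that the list is complete; completeness is where Abe's explicit Zhu-algebra computation does real work. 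Separately, your description of the twisted sector is backwards for symplectic fermions: since the odd generators have conformal weight $1$, it is the \emph{untwisted} sector that has integer modes and zero modes $e^i_0, f^i_0$ (which form a Grassmann algebra with degenerate pairing, responsible for the logarithmic modules $\widehat{SF(d)}^\pm$, not a Clifford algebra with a unique $2^d$-dimensional irreducible), while the $\theta$-twisted sector has modes in $\tfrac12 + \mathbb Z$, no zero modes, and a one-dimensional bottom component $\mathbb C \mathbf 1_\theta$ of weight $-d/8$, as recorded in Subsection \ref{subsec:rep}. The conclusion (one irreducible in each sector) survives, but the mechanism you give for it does not.
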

Abe also exhibits reducible and indecomposable extensions of $SF(d)^\pm$
denoted by $\widehat{SF(d)}^\pm$. From the existence of such modules it follows that $SF(d)^+$ can't be rational.

To get the epimorphism $\pi : A(SF(d)^+) \to \mathcal A_d$ and to prove Proposition \ref{propinv}, we will need to recall the action of $A(SF(d)^+)$
on top components of these representations, described in \cite{Abe}.
By a result from \cite{Zhu}, the images of strong generators of $SF(d)^+$ are the generators of
$A(SF(d)^+)$, so we need to consider only the action of strong generators.  

\subsubsection{$SF(d)_{\theta}^+$}
The top component of $SF(d)_{\theta}^+$ is one-dimensional, spanned
with the vacuum vector $\mathbf 1_{\theta}$ with conformal weight $-d/8$.
As for the action of the strong generators of $SF(d)^+$, we have
$$o(h^{ii})\mathbf 1_{\theta} = - \frac{1}{8} \mathbf 1_{\theta}, $$
while the rest of the generators act as $0$.

\subsubsection{$SF(d)_{\theta}^-$}
The top component of $SF(d)_{\theta}^-$ is $2d$-dimensional, spanned by
$e_{-\frac{1}{2}}^k \mathbf 1_{\theta}, f_{-\frac{1}{2}}^k \mathbf 1_{\theta},$
for $1 \leq k \leq d$, with conformal weight $-d/8 + 1/2$.
We will slightly abuse notation and write
$$e^k = e_{-\frac{1}{2}}^k \mathbf 1_{\theta}, f^k = f_{-\frac{1}{2}}^k \mathbf 1_{\theta},$$
for simplicity.
Let $1 \leq i, j \leq d, i \neq j$. The small generators act as:
\begin{align*}
o(e^{ij})e^k &= 0, &o(e^{ij})f^k &= \frac{1}{2}(\delta_{ik}e^j - \delta_{jk}e^i) \\
o(f^{ij})e^k &= \frac{1}{2}(\delta_{jk}f^i - \delta_{i,k}f^j), &o(f^{ij})f^k &= 0 \\ 
o(h^{ij})e^k &= \frac{1}{2}\delta_{jk}e^i, &o(h^{ij})f^k &= \frac{1}{2}\delta_{ik}f^j, \ (i \neq j)\\
o(h^{ii})e^k &= \frac{1}{2}\delta_{ik}e^i - \frac{1}{8}e^k, &o(h^{i,i})f^k &= \frac{1}{2}\delta_{ik}f^i - \frac{1}{8}f^k.
\end{align*}

The large generators act as (now we allow $i = j$):
\begin{align*}
o(E^{ij})e^k &= 0, &o(E^{ij})f^k &= - \frac{1}{4} (\delta_{ik}e^j + \delta_{jk}e^i) \\
o(F^{ij})e^k &= \frac{1}{4} (\delta_{jk}f^i + \delta_{ik}f^j), &o(F^{ij})f^k &= 0 \\ 
o(H^{ij})e^k &= \frac{1}{4} \delta_{jk}e^i, &o(H^{ij})f^k &= - \frac{1}{4} \delta_{ik}f^j.
\end{align*}.

\subsubsection{$SF(d)^+$}
The top component of $SF(d)^+$ is $1$-dimensional (spanned
by $\mathbf 1$) of conformal weight $0$.
All generators act trivially.

\subsubsection{$SF(d)^-$}
The top component of $SF(d)^-$ is $2d$-dimensional, spanned by
$e^k, f^k, 1 \leq k \leq d$ with conformal weight $1$.
The small generators act as
\begin{align*}
o(e^{ij})e^k &= 0, &o(e^{ij})f^k &= \delta_{ik}e^j - \delta_{jk}e^i \\
o(f^{ij})e^k &= \delta_{jk}f^i - \delta_{ik}f^j, &o(f^{ij})f^k &= 0 \\ 
o(h^{ij})e^k &= \delta_{jk}e^i, &o(h^{ij})f^k &= \delta_{ik}f^j,
\end{align*}
while the large generators act as
\begin{align*}
o(E^{ij})e^k &= 0, &o(E^{ij})f^k &= - \delta_{ik}e^j - \delta_{jk}e^i \\
o(F^{ij})e^k &= \delta_{jk}f^i + \delta_{ik}f^j, &o(F^{ij})f^k &= 0 \\ 
o(H^{ij})e^k &= \delta_{jk}e^i, &o(H^{ij})f^k &= - \delta_{ik}f^j.
\end{align*}

\subsubsection{$\widehat{SF(d)}^+$}
In the Chapter 5 of \cite{Abe}, it was shown that $\Omega(\widehat{SF(d)}^+)
= \widehat{SF(d)}^+_0$.
The top component $\widehat{SF(d)}^+_0$ is $2^{2d-1}$-dimensional 
and isomorphic to the even part of an exterior algebra on generators
$e^i_0, f^i_0, 1 \leq i \leq d$. Its conformal weight is $0$.
Small generators act as multiplication by monomials of length 2 in
the following way:
$$o(e^{ij}) = e^i_0 e^j_0, \ o(h^{ij}) = e^i_0 f^j_0, \ o(f^{ij}) = f^i_0 f^j, $$
while large generators act as $0$.
Notice that $o(\omega)$ is a nilpotent operator of degree $d+1$.

\subsubsection{$\widehat{SF(d)}^-$}
In the Chapter 5 of \cite{Abe} it was shown that
$$\Omega(\widehat{SF(d)}^-) = \widehat{SF(d)}^-_{(0)} \oplus \widehat{SF(d)}[2d]^-_{(1)},$$
where $\widehat{SF(d)}[2d]^-_{(1)}$ is isomorphic as an $A(SF(d)^+)$-module
to the top component of $SF(d)^-$.

\subsection{Epimorphism $\pi : A(SF(d)^+) \to \mathcal A_d$} \label{subsec:epi}
In this subsection we will show the existence of an algebra epimorphism
$\pi : A(SF(d)^+) \to \mathcal A_d$.
For this, we will use a following generalization of the Chinese remainder theorem (see \cite{IR}):
\begin{thm}
  \label{KTOO}
  Let $R$ be a ring (with unity), and let $I_1, I_2, \ldots, I_k$ be
  two-sided ideals in $R$. If these ideals are pairwise coprime, then
  there is an epimorphism
  \begin{align*}
    \pi : R &\to R/I_1 \oplus R/I_2 \oplus \ldots \oplus R/I_k \\
          x &\mapsto (x + I_1, x + I_2, \ldots, x + I_k)
  \end{align*}
  with $\ker \pi = I_1 \cap I_2 \cap \ldots \cap I_k$.
\end{thm}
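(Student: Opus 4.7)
The plan is to treat this as the standard Chinese Remainder Theorem, adapted to a possibly non-commutative ring with two-sided ideals. First I would observe that $\pi$ is a ring homomorphism (it is the diagonal map into the product of quotient rings, each component of which is a ring homomorphism) and that $\ker \pi = I_1 \cap \cdots \cap I_k$ is immediate from the definition: $\pi(x) = 0$ iff $x \in I_j$ for every $j$. So the real content is surjectivity, and that is where the pairwise coprimality hypothesis $I_i + I_j = R$ for $i \neq j$ must be used.

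For surjectivity I would use the classical ``system of almost-idempotents'' argument. The goal is to produce, for each $j \in \{1, \ldots, k\}$, an element $e_j \in R$ satisfying $e_j \equiv 1 \pmod{I_j}$ and $e_j \equiv 0 \pmod{I_i}$ for every $i \neq j$. Once such $e_j$ are in hand, an arbitrary tuple $(x_1 + I_1, \ldots, x_k + I_k)$ is realized as $\pi\bigl(\sum_{j=1}^k x_j e_j\bigr)$, finishing the proof.

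The construction of, say, $e_1$ proceeds as follows. For each $i \geq 2$, coprimality of $I_1$ and $I_i$ provides $u_i \in I_1$ and $v_i \in I_i$ with $1 = u_i + v_i$. I set $e_1 := v_2 v_3 \cdots v_k$. Expanding the identity $e_1 = \prod_{i=2}^k (1 - u_i)$, every term besides $1$ is a product of elements of $R$ that contains at least one factor $u_i \in I_1$, so each such term lies in $I_1$; hence $e_1 \equiv 1 \pmod{I_1}$. On the other hand, for $i \geq 2$ the factor $v_i \in I_i$ appears in the product defining $e_1$, and so $e_1 \in I_i$. The $e_j$ for $j \geq 2$ are constructed identically after permuting indices.

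The only place where a little care is needed, and hence the main (minor) obstacle, is justifying the manipulations in a non-commutative setting: one cannot rearrange factors of the product $v_2 \cdots v_k$ at will. This is handled by repeatedly invoking the fact that the ideals are \emph{two-sided}, so that any product in $R$ containing a factor from $I_j$ automatically lies in $I_j$ regardless of position. With this observation the expansion argument above goes through verbatim, which is why I prefer the direct construction to an inductive reduction to the $k = 2$ case (the inductive route would require exactly the same observation to establish $I_1 + (I_2 \cap \cdots \cap I_k) = R$).
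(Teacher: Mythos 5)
Your argument is correct: the kernel computation is immediate, and your construction of the elements $e_j$ with $e_j \equiv 1 \pmod{I_j}$ and $e_j \in I_i$ for $i \neq j$ (together with the observation that two-sidedness lets any product containing a factor from $I_j$ land in $I_j$, so the expansion of $\prod_{i \neq j}(1-u_i)$ goes through without commutativity) is the standard proof of the non-commutative Chinese Remainder Theorem. The paper itself gives no proof of this statement --- it is quoted as a known result with a citation to Ireland--Rosen --- so there is nothing to compare against; your write-up supplies exactly the classical argument that the cited reference contains.
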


We will denote the homomorphisms connected to the representations of $A(SF(d)^+)$
in the following way:
\begin{align*}
  \rho^\pm &: A(SF(d)^+) \to \End_{\mathbb C} (\Omega(SF(d)^\pm)) \\
  \rho_\theta^\pm &: A(SF(d)^+) \to \End_{\mathbb C} (\Omega(SF_\theta(d)^\pm)) \\
  \widehat{\rho^\pm} &: A(SF(d)^+) \to \End_{\mathbb C} (\widehat{\Omega(SF(d)^\pm)}).
\end{align*}

We want to use Theorem \ref{KTOO} on the ideals $\ker(\widehat{\rho^+}), \ker(\rho^-),\ker(\rho_\theta^\pm)$ -
for that, we need to check that those ideals are
pairwise coprime and we need to know the images of these homomorphisms.
We get the coprimality by observing the minimal polynomials of
$o(\omega)$ on different representations.
We have:
\begin{align*}
  [\omega]^{d+1} &\in \ker(\widehat{\rho^+}) \\
  [\omega] - 1 &\in \ker(\rho^-)\\
  [\omega] + d/8  &\in \ker(\rho_\theta^+)\\
  [\omega] + (d/8-1/2) &\in \ker(\rho_\theta^-).
\end{align*}
Notice that all these polynomials in $[\omega]$ are pairwise coprime.

By Jacobson density theorem we get
$$\im(\rho^-) \simeq \im(\rho^-_\theta) \simeq M_{2d}(\mathbb C), \ \im(\rho^+_\theta) \simeq \mathbb C$$
for irreducible representations.
It's clear from the previous subsection that $\im(\widehat{\rho^+})$ is
isomorphic to $\Lambda^{\text{even}}(V_{2d})$ (even part of exterior algebra on
the standard representation of $\mathfrak{sp}(2d)$).
Now, we can use Theorem \ref{KTOO} to get an epimorphism
$$\pi : A(SF(d)^+) \to \mathcal A_d = M_{2d}(\mathbb C) \oplus M_{2d}(\mathbb C) \oplus \Lambda^{\text{even}}(V_{2d}) \oplus \mathbb C. $$
\newpage
\section{Spanning set for $\mathcal P(SF(d)^+)$}
The main result of this section is the existence of a spanning set for
$\mathcal P(SF(d)^+)$ of cardinality $n_d = 2^{2d-1} + 8d^2 + 1$.
Let us describe it more precisely.
Put $x^i = e^i, x^{i+d} = f^i$, for $1 \leq i \leq d$.
Denote $B_d^1$ the set of all even length monomials obtained by $(-1)$th
multiplication of $x^i$, and $B_d^2$ the set of the following vectors:
\begin{align*}
   x^i_{-2}x^j,  &1 \leq i \leq j \leq 2d \\
   x^i_{-3}x^j,  &1 \leq i < j \leq 2d \\
   x^i_{-4}x^j,  &1 \leq i \leq j \leq 2d \\
   x^i_{-5}x^j,  &1 \leq i < j \leq 2d \\
   e^1_{-7}f^1. &\ 
\end{align*}
It is easy to see that $|B_d^1| = 2^{2d-1}$ and $|B_d^2| = 8d^2 + 1$.
The main Theorem of this section is:
\begin{thm} \label{Bd}
  Image of $B_d = B_d^1 \cup B_d^2$ in $\mathcal P(SF(d)^+)$ is
  a spanning set of $\mathcal P(SF(d)^+)$.
\end{thm}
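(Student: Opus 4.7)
The plan is to show that every element of $SF(d)^+$ is congruent modulo $C_2(SF(d)^+)$ to a linear combination of vectors in $B_d$. Using a PBW-type spanning set, $SF(d)^+$ is spanned by even-length monomials
$$M = x^{i_1}_{-n_1}\,x^{i_2}_{-n_2}\cdots x^{i_{2k}}_{-n_{2k}}\,\mathbf{1},\qquad n_j\ge 1,$$
so it suffices to reduce each such $M$ to an element of $\sspan(B_d)$ modulo $C_2(SF(d)^+)$.

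The central observation is that each length-two product $x^i_{-m}x^j$ already lies in $SF(d)^+$ (since a product of two odd fermions is even); hence for every $m\ge 1$ one has
$$L_{-1}(x^i_{-m}x^j) = m\,x^i_{-m-1}x^j + x^i_{-m}x^j_{-2}\mathbf{1}\in C_2(SF(d)^+).$$
Combining this identity with its $(i,j)$-swap and exploiting fermion anticommutation $x^j_{-p}x^i_{-q}\mathbf{1}=-x^i_{-q}x^j_{-p}\mathbf{1}$ to rearrange the double-mode terms, one deduces by induction that $\overline{x^i_{-n}x^j}$ is symmetric in $(i,j)$ for even $n$ and antisymmetric for odd $n$. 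These (anti)symmetry properties give exactly the index constraints $i\le j$ for $n\in\{2,4\}$ and $i<j$ for $n\in\{3,5,7\}$ appearing in $B_d^2$.

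For monomials of length $2k\ge 4$, I would exploit $u_{-n}w\in C_2(SF(d)^+)$ whenever $u,w\in SF(d)^+$ and $n\ge 2$. Taking $u = x^{i_a}_{-1}x^{i_{a+1}}$ (an even length-two segment inside $M$) and expanding $u_{-n}w$ via the Borcherds/normal-ordered-product formulas produces relations which, together with fermion anticommutation, rewrite $M$ modulo $C_2$ as a linear combination of either a length-two monomial (handled above) or of length-$2k$ monomials with all modes equal to $-1$; the latter lie in $\sspan(B_d^1)$ by definition.

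The main obstacle, which I expect to be the technically delicate step, is showing that length-two monomials $x^i_{-n}x^j$ with $n\ge 6$ reduce modulo $C_2(SF(d)^+)$ to the finite list in $B_d^2$ (modes $n\le 5$ together with the single extra vector $e^1_{-7}f^1$). This requires combining the $L_{-1}$-relations above with further identities arising from $\omega_{-m}v = L_{-m-1}v \in C_2$ for $m\ge 2$ and from $(x^i_{-1}x^j)_{-m}(x^k_{-1}x^l)\in C_2$; expanded via associativity, these form a system of recursions that, under $Sp(2d)$-equivariance, collapses every class with $n\ge 6$ except the $\mathfrak{sp}(2d)$-invariant vector $\overline{e^1_{-7}f^1}$. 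The survival of this single additional class matches the extra one-dimensional summand $\mathbb{C}$ in $\mathcal{A}_d$ coming from the $\theta$-twisted module $SF(d)_\theta^+$, and pinning down exactly which mode-$\ge 6$ classes reduce is the crux of the argument.
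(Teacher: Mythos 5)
Your outline matches the paper's overall architecture (reduce to length-two monomials via the $L_{-1}$-relations, then handle longer monomials by normal-ordered products of even length-two segments), but at the two places where the real work happens you describe what must be proved rather than proving it, and in both places the missing content is substantial. First, the cutoff for length-two monomials: you correctly flag the reduction of $x^i_{-n}x^j$ with $n\ge 6$ as the crux, but the ``system of recursions that collapses every class with $n\ge 6$ except $\overline{e^1_{-7}f^1}$'' is not something that falls out of $Sp(2d)$-equivariance alone. The paper's argument needs three specific inputs: (i) Abe's relations $\overline{e^i_{-6}e^i}=\overline{0}$ and $\overline{e^i_{-9}f^i}=\overline{0}$, which are where the numbers $6$ and $7$ actually come from; (ii) the step-by-two recursion $e^i_{-k}x\in C_2\Rightarrow e^i_{-k-2}x\in C_2$ obtained by computing $h^{ii}_{-1}(e^i_{-k}x)$, which propagates (i) to all higher modes of the right parity but deliberately misses $e^i_{-7}f^i$; and (iii) the identity $\overline{e^i_{-7}f^i}=\overline{e^j_{-7}f^j}$ for all $i,j$, which rests on the cubic relations $(\overline{h^{ii}-h^{jj}})^3=\overline{0}$ and $(\overline{h^{ii}+h^{jj}})^3=12(\overline{h^{ii}+h^{jj}})\overline{h^{ii}}\,\overline{h^{jj}}$ proved by explicit computation in the paper's Lemma 4.1. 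Point (iii) is not cosmetic: without it the spanning set would need all $d$ vectors $e^i_{-7}f^i$, giving cardinality $2^{2d-1}+8d^2+d>n_d$, and the squeeze against $\dim\mathcal A_d$ would fail. Your closing heuristic that the survival of one extra class ``matches'' the summand $\mathbb C$ in $\mathcal A_d$ cannot substitute for this, since the spanning bound must be established independently of the lower bound for the squeeze to work.

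Second, for length $\ge 4$ your claim that normal ordering rewrites every monomial as a combination of length-two monomials and monomials with \emph{all} modes equal to $-1$ is too strong and is where the argument would break. The associativity expansion of $(x^{i_{2k-1}}_{-1}x^{i_{2k}})_{-1}(\cdots)$ only reduces to monomials of the form $x^{i_1}_{-n}x^{i_2}_{-1}\cdots x^{i_{2k}}_{-1}\mathbf 1$ with one mode still arbitrary; killing or reducing that remaining mode $n\ge 2$ is exactly the content of the paper's Technical Lemma for length $4$, which is a case-by-case analysis (depending on how many indices coincide) using products such as $e^{12}_{-2}h^{11}$, $e^{13}_{-3}e^{12}$, the $\mathfrak{sp}(2d)$-action, and, in the hardest subcase $e^1_{-n}f^1_{-1}e^2_{-1}f^2$, the multiplication formula for $\overline{e^i_{-m}f^i}\cdot\overline{e^i_{-k}f^i}$ again. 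The induction from length $4$ to length $2k$ then goes through cleanly, but only because the base case has been done by hand. As written, your proposal would need both of these computations supplied before it constitutes a proof.
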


Also, in this section we will write $C_2 := C_2(SF(d)^+)$ for simplicity.
\subsection{Some relations in $\mathcal P(SF(d)^+)$} \label{subsec:lem}
In this section we prove some lemmas that will be helpful in
the proof of Theorem \ref{Bd}. First, let us recall some definitions
and relations from \cite{Abe}.
In the Chapter 3 of \cite{Abe} it is defined that:
\begin{equation}
  \label{eq:Bdef}
  B_{m,n}(a,b) = \frac{(m-1)! (n-1)!}{(m+n-1)!} a_{-m}b_{-n}\mathbf 1,
\end{equation}
for $a, b \in \mathfrak h$.
Following relations hold in $\mathcal P(SF(d)^+)$:
\begin{align}
  \overline{B_{m,n}(a,b)} &= (-1)^{n-1} \overline{B_{m+n-1, 1}(a,b)} \label{bmn1}\\
  \overline{B_{m,n}(a,b)} &= (-1)^{m+n-1}\overline{B_{m,n}(b,a)}. \label{bmn2}
\end{align}
These relations show that every monomial of length 2 in $SF(d)^+$ can
be written modulo $C_2$ as a linear combination of the vectors
\begin{align*}
  x^i_{-n}x^j, &\text{ where } n \in \mathbb Z_{>0} \text{ is odd and }   1 \leq i < j \leq 2d, \\
  x^i_{-n}x^j, &\text{ where } n \in \mathbb Z_{>0} \text{ is even and }   1 \leq i \leq j \leq 2d.
\end{align*}

Also, we will need some relations from the proof of Proposition 3.12 in \cite{Abe}:
\begin{align}
  (\overline{e^i_{-m}f^i}) \cdot (\overline{e^i_{-k}f^i}) &= mk \left(\frac{1}{m+1} + \frac{1}{k+1}\right) {m + k \choose k} \overline{e^i_{-m-k-1}f^i} 
                                    \label{efef} \\
  \overline{e^i_{-6}e^i} &= \overline{0} \label{e6}\\ 
  \overline{e^i_{-9}f^i} &= \overline{0} \label{e9}
\end{align}

Now, we are ready to prove an important Lemma.

\begin{lem} \label{lemlem}
  In $\mathcal P(SF(d)^+)$ we have
  \begin{align}
    (\overline{h^{ii} - h^{jj}})^3 &= \overline{0} \label{eq:l1}\\
    (\overline{h^{ii} + h^{jj}})^3 &= 12 (\overline{h^{ii} + h^{jj}}) \cdot \overline{h^{ii}} \cdot \overline{h^{jj}}. \label{eq:l2} 
  \end{align}
  for $1 \leq i < j \leq d$.
\end{lem}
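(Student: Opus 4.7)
Write $a := \overline{h^{ii}}$ and $b := \overline{h^{jj}}$; these commute in $\mathcal{P}(SF(d)^+)$. Iterating \eqref{efef} I obtain $a^2 = 2\,\overline{e^i_{-3}f^i}$, $a^3 = 18\,\overline{e^i_{-5}f^i}$, $a^4 = 360\,\overline{e^i_{-7}f^i}$, and then $a^5 = 0$ by \eqref{e9}; the analogous formulas hold for $b$. In particular each pure power $a^k$ is an explicit multiple of $\overline{e^i_{-2k+1}f^i}$, whereas every mixed product $a^r b^s$ with $r,s\geq 1$ is a four-fermion monomial involving both the $i$-th and the $j$-th fermion pair.

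A direct expansion shows that \eqref{eq:l1} and \eqref{eq:l2} together are equivalent to the single symmetric relation $a^3 = 6\,a^2 b + 3\,a b^2$; its companion $b^3 = 3\,a^2 b + 6\,a b^2$ then follows automatically by applying the transposition automorphism $(i\,j)\in S_d\subseteq\Aut(SF(d))$ from Subsection~\ref{subsec:auto}. Substituting the explicit formulas above reduces matters further to the single identity
\[
3\,\overline{e^i_{-5}f^i} \;=\; 2\,\overline{e^i_{-3}f^i}\cdot\overline{e^j_{-1}f^j} \;+\; \overline{e^i_{-1}f^i}\cdot\overline{e^j_{-3}f^j}
\]
in $\mathcal{P}(SF(d)^+)$, which relates a pure-$i$ element to mixed $(i,j)$-products. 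This is the essential combinatorial content of the lemma.

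To produce this last identity the plan is to examine images in $\mathcal{P}$ of elements $u_{-2} v \in C_2(SF(d)^+)$ with $u, v \in \{h^{ii}, h^{jj}, h^{ii} \pm h^{jj}\}$. Since $e^i(z) f^j(w)$ is nonsingular for $i \neq j$, the Borcherds/Wick expansion of such a $u_{-2}v$ is a manageable sum of pure-sector and mixed-sector monomials, which can be brought to canonical form via \eqref{bmn1} and \eqref{bmn2}. A suitable linear combination of the resulting $C_2$-relations should yield the required identity. The main obstacle is not conceptual but combinatorial: careful bookkeeping of fermionic signs and of the normally ordered terms arising in the Wick expansion. In spirit the computation is of the same character as Abe's derivation of \eqref{efef}--\eqref{e9} in \cite{Abe}, and no new vertex-algebraic input beyond that is required.
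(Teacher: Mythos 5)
Your opening reduction is correct and in fact tidier than the paper's own argument: writing $a=\overline{h^{ii}}$, $b=\overline{h^{jj}}$, the pair \eqref{eq:l1}--\eqref{eq:l2} is indeed equivalent (given the transposition automorphism swapping $i$ and $j$) to the single relation $a^3=6a^2b+3ab^2$; your power formulas $a^2=2\,\overline{e^i_{-3}f^i}$, $a^3=18\,\overline{e^i_{-5}f^i}$ follow from \eqref{efef}; and the displayed identity
\[
3\,\overline{e^i_{-5}f^i}=2\,\overline{e^i_{-3}f^i}\cdot\overline{e^j_{-1}f^j}+\overline{e^i_{-1}f^i}\cdot\overline{e^j_{-3}f^j}
\]
is exactly that cubic relation rewritten. (The paper proves the antisymmetric part \eqref{eq:l1} via auxiliary relations involving $h^{12},h^{21}$ and the symmetric part \eqref{eq:l2} by a separate computation; your single identity plus the permutation symmetry would recover both at once.)

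The gap is that this identity is only asserted, and the method you propose cannot produce it. A minor point first: with $u,v$ of conformal weight $2$, $u_{-2}v$ has weight $5$, whereas the identity lives in weight $6$; you need $u_{-3}v=\tfrac12(L_{-1}u)_{-2}v$. The serious problem is the restriction to $u,v\in\{h^{ii},h^{jj},h^{ii}\pm h^{jj}\}$. Precisely because $e^i(z)f^j(w)$ is nonsingular for $i\neq j$ (the fact you invoke), the expansion of $h^{ii}_{-n}h^{jj}$ admits no contractions and consists solely of mixed four-fermion monomials, while $h^{ii}_{-n}h^{ii}$ produces only pure sector-$i$ monomials. Every $C_2$-relation extractable from diagonal elements therefore decomposes into a pure-sector part and a mixed part separately, and no linear combination of such relations can equate the pure element $\overline{e^i_{-5}f^i}$ on your left-hand side with the mixed products on your right-hand side. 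The indispensable input, which the paper's proof supplies, is the set of relations coming from the \emph{off-diagonal} weight-two generators: in $e^{12}_{-3}f^{12}+f^{12}_{-3}e^{12}$ and $h^{12}_{-3}h^{21}+h^{21}_{-3}h^{12}$ the cross contractions ($e^1$ with $f^1$, $e^2$ with $f^2$) produce the $e^i_{-5}f^i$ terms alongside the mixed four-fermion terms within one and the same $C_2$-element; combining these with $h^{11}_{-3}h^{22}+h^{22}_{-3}h^{11}$ and $L_{-1}^2(h^{11}_{-1}h^{22})\in C_2$ is what yields $\overline{\omega}\cdot\overline{Z}=2\sum_i\overline{e^i_{-5}f^i}$, the symmetrization of your identity. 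Your plan must be enlarged to include $e^{ij},f^{ij},h^{ij}$ with $i\neq j$ before the "combinatorial bookkeeping" can even get started.
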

\begin{proof}
  Because of the permutation automorphisms (cf. subsection \ref{subsec:auto}, it is sufficient to show this in case
  $i = 1, j = 2$ (in both cases). Relation \eqref{eq:l1} will follow from:
  \begin{align}
    (\overline{h^{11} - h^{22}}) \cdot \overline{h^{12}} &= \overline{0}
                                                           \label{lem1rel1}\\
    \overline{h^{12}} \cdot \overline{h^{21}} &= \frac 1 2 (\overline{h^{11} - h^{22}}) \label{lem1rel2}
  \end{align}
  We calculate:
  \begin{align*}
    h^{11}_{-1}h^{12} &= e^1_{-3}f^1_1 e^1_{-1}f^2 \\
                      &= e^1_{-3}f^2 \\
    h^{22}_{-1}h^{12} &= - f^2_{-3}e^2_1 e^1_{-1}f^2 \\
                      &= - f^2_{-3}e^1 \\
                      &\stackrel{\eqref{bmn2}}{=} e^1_{-3}f^2 \mod C_2.
  \end{align*}
  The second one follows from
  $$(\overline{h^{ii}})^2 \stackrel{\eqref{efef}}{=} 2 \overline{e^i_{-3}f^i} $$
  and some direct calculation.

  Now we turn to the relation \eqref{eq:l2}.
  Let's denote
  $$Z:= (h^{11})_{-1}h^{22} = e^1_{-1}f^1_{-1}e^2_{-1}f^2.$$
  Then, we need to prove
  $$\overline{\omega}^3 = 12 \overline{\omega} \overline{Z}.$$
  We check that
  $$L_{-2}Z = \sum_{sym} e^1_{-3}f^1_{-1}e^2_{-1}f^2_{-1}\mathbf 1, $$
  and calculate:
  \begin{align*}
    h^{11}_{-3}h^{22} + h^{22}_{-3}h^{11} &= L_{-2}Z + e^1_{-2}f^1_{-2}e^2_{-1}f^2_{-1}\mathbf 1 + e^1_{-1}f^1_{-1}e^2_{-2}f^2_{-2}\mathbf 1 \\
    e^{12}_{-3}f^{12} + f^{12}_{-3}e^{12} &= -L_{-2}Z - e^1_{-2}f^1_{-1}e^2_{-2}f^2_{-1}\mathbf 1 - e^1_{-1}f^1_{-2}e^2_{-1}f^2_{-2}\mathbf 1 \\ &\  + \sum (e^i_{-5}f^i - f^i_{-5} e^i) \\
    h^{12}_{-3}h^{21} + h^{21}_{-3}h^{12} &= -L_{-2}Z - e^1_{-2}f^1_{-1}e^2_{-1}f^2_{-2}\mathbf 1 - e^1_{-1}f^1_{-2}e^2_{-2}f^2_{-1}\mathbf 1 \\ &\  + \sum (e^i_{-5}f^i - f^i_{-5} e^i). \\
  \end{align*}

  If we subtract the second and third equation from the first one we see
  $$3 \overline{\omega} \cdot \overline{Z} + \sum_{sym} \overline{e^1_{-2} e^2_{-2} f^1_{-1} f^2_{-1}\mathbf 1} = 2 \sum (\overline{e^i_{-5}f^i - f^i_{-5} e^i})$$
Because of
  $$L_{-1}^2 Z = 2L_{-2}Z + 2 \sum_{sym} e^1_{-2} e^2_{-2} f^1_{-1} f^2_{-1}\mathbf 1,$$
  we can simplify to
  $$\overline{\omega} \cdot \overline{Z} = \sum (\overline{e^i_{-5}f^i - f^i_{-5} e^i}) \stackrel{\eqref{bmn2}}{=} 2 \sum(\overline{e^i_{-5}f^i}).$$
   Using the relation \eqref{efef} we have:
    $$\overline{\omega} \overline{Z}
    = \frac{1}{9}( \overline{h^{11}}^3 + \overline{h^{22}}^3).$$
To conclude:
  \begin{align*}
    \overline{\omega}^3 &= \overline{h^{11}}^3 +3 \overline{h^{11}}^2\overline{h^{22}} + 3\overline{h^{11}}\overline{h^{22}}^2 + \overline{h^{22}}^3 \\
                        &= \overline{h^{11}}^3 + 3 \overline{\omega} \overline{Z} + \overline{h^{22}}^3 \\
    &= 12 \overline{\omega} \overline{Z}.
  \end{align*}
\end{proof}

\begin{cor} \label{cor}
  In $\mathcal P(SF(d)^+)$ for $1 \leq i, j \leq d$ we have
  $$(\overline{h^{ii}})^4 = (\overline{h^{jj}})^4,$$
  which is equivalent to
  $$\overline{e^i_{-7}f^i} = \overline{e^j_{-7}f^j}.$$
\end{cor}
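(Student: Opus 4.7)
The plan is to reduce everything to a short algebraic manipulation inside the commutative algebra $\mathcal P(SF(d)^+)$, using Lemma \ref{lemlem} as a black box, and then translate the conclusion into a statement about $\overline{e^i_{-7}f^i}$ via relation \eqref{efef}.

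First I would dispose of the trivial case $i=j$ and then, by the symmetry of the claim in $i$ and $j$, assume $1 \leq i < j \leq d$. Writing $a := \overline{h^{ii}}$ and $b := \overline{h^{jj}}$, Lemma \ref{lemlem} gives the two identities
\[
  (a-b)^{3} = 0, \qquad (a+b)^{3} = 12\,(a+b)\,a\,b
\]
in the commutative algebra $\mathcal P(SF(d)^+)$. Expanding and separating even/odd parts in $a,b$ (equivalently, adding and subtracting the two identities) produces
\[
  a^{3} - b^{3} = 3ab(a-b), \qquad a^{3} + b^{3} = 9ab(a+b),
\]
and then
\[
  a^{3} = 6a^{2}b + 3ab^{2}, \qquad b^{3} = 3a^{2}b + 6ab^{2}.
\]

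The key step is to multiply the first of these by $b$ and the second by $a$, obtaining $a^{3}b = 6a^{2}b^{2} + 3ab^{3}$ and $ab^{3} = 3a^{3}b + 6a^{2}b^{2}$; subtracting eliminates the $a^{2}b^{2}$ term and yields $4a^{3}b - 4ab^{3} = 0$, i.e.\ $ab(a^{2}-b^{2}) = 0$. From the derived cubic identities we also have the direct computation
\[
  a^{4} - b^{4} = a\cdot a^{3} - b\cdot b^{3} = (6a^{3}b+3a^{2}b^{2}) - (3a^{2}b^{2}+6ab^{3}) = 6ab(a^{2}-b^{2}),
\]
so $(\overline{h^{ii}})^{4} = a^{4} = b^{4} = (\overline{h^{jj}})^{4}$, establishing the first form of the corollary.

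For the equivalence with $\overline{e^i_{-7}f^i} = \overline{e^j_{-7}f^j}$ I would simply iterate relation \eqref{efef}. With $m=k=1$ it gives $(\overline{h^{ii}})^{2} = 2\,\overline{e^i_{-3}f^i}$, and with $m=k=3$ it gives $(\overline{e^i_{-3}f^i})^{2} = 90\,\overline{e^i_{-7}f^i}$; squaring the first and substituting the second yields $(\overline{h^{ii}})^{4} = 360\,\overline{e^i_{-7}f^i}$. Since $360 \neq 0$, the identity $(\overline{h^{ii}})^{4} = (\overline{h^{jj}})^{4}$ is equivalent to $\overline{e^i_{-7}f^i} = \overline{e^j_{-7}f^j}$, completing the proof.

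I do not expect a real obstacle here: the content is entirely in the clever combination of the two cubic relations from Lemma \ref{lemlem}. The only thing one must watch is sign/coefficient bookkeeping (the correct combination is \emph{subtracting} $ab^{3} = 3a^{3}b+6a^{2}b^{2}$ from $a^{3}b = 6a^{2}b^{2} + 3ab^{3}$, not adding), and the routine computation of the binomial coefficients in \eqref{efef} to pin down the factor $360$.
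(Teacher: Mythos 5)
Your proof is correct and follows essentially the same route as the paper: both deduce $(\overline{h^{ii}})^4=(\overline{h^{jj}})^4$ purely from the two cubic relations of Lemma \ref{lemlem} (the paper exhibits the single identity $z_i^4-z_j^4=\tfrac14\bigl(5(z_i+z_j)p+(z_j-z_i)q\bigr)$, while you reach the same ideal membership by a step-by-step elimination), and both obtain the equivalence from \eqref{efef}, your coefficient $(\overline{h^{ii}})^4=360\,\overline{e^i_{-7}f^i}$ agreeing with the one the paper uses later. No gaps.
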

\begin{proof}
  Equivalence follows from the formula \eqref{efef}.
  Let's put $z_i:= \overline{h^{ii}}$. In that notation, previous two
  Lemmas say:
  \begin{align*}
    p(z_i, z_j) &:= (z_i - z_j)^3 = \overline{0} \\
    q(z_i, z_j) &:= (z_i + z_j)^3 - 12(z_i + z_j)z_i z_j = \overline{0}.
  \end{align*}
  Now, notice that we have:
  $$z_i^4 - z_j^4 = \frac{1}{4} (5(z_i + z_j)p(z_i, z_j) + (z_j - z_i)q(z_i, z_j) ). $$
\end{proof}
\subsection{Monomials of length 2}

The goal of this subsection is to prove the following:
\begin{prop} \label{propd2}
  All monomials of length 2 in $SF(d)^+$ can be written (modulo $C_2$)
  as a linear combination of the following vectors:
  \begin{align*}
    x^i_{-1}x^j,  &1 \leq i < j \leq 2d \\
   x^i_{-2}x^j,  &1 \leq i \leq j \leq 2d \\
   x^i_{-3}x^j,  &1 \leq i < j \leq 2d \\
   x^i_{-4}x^j,  &1 \leq i \leq j \leq 2d \\
   x^i_{-5}x^j,  &1 \leq i < j \leq 2d \\
   e^1_{-7}f^1. &\ 
  \end{align*}
 \end{prop}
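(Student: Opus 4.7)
The plan is to reduce any length-$2$ monomial $x^i_{-m}x^j_{-n}\mathbf 1$ in $SF(d)^+$ modulo $C_2$ to a combination of the listed vectors, stratifying by the depth $p:=m+n-1$. The opening reduction is already outlined in the paragraph following \eqref{bmn2}: using \eqref{bmn1}--\eqref{bmn2} I replace $\overline{x^i_{-m}x^j_{-n}\mathbf 1}$ by a scalar multiple of a canonical form $\overline{x^i_{-p}x^j}$, with $i<j$ when $p$ is odd and $i\le j$ when $p$ is even. For $p\in\{1,2,3,4,5\}$ the canonical forms are literally in the list, so the content of the proposition is that every canonical form with $p\ge 6$ vanishes modulo $C_2$, except $\overline{e^1_{-7}f^1}$.

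For $p=6$, starting from \eqref{e6}, $\overline{e^i_{-6}e^i}=\overline{0}$, I spread the vanishing using the $\mathfrak{sp}(2d)$-derivations of Subsection \ref{subsec:auto} (they descend to $\mathcal P(SF(d)^+)$ since they preserve $C_2$). Applying $V_i$ and using \eqref{bmn2} at sign $(-1)^6=1$ gives $2\overline{e^i_{-6}f^i}=\overline{0}$; then $X_{ij}$, $Y_{ij}$, $Z_{ij}$ send $\overline{e^i_{-6}f^i}=\overline{0}$ to vanishing of $\overline{e^i_{-6}f^j}$, $\overline{e^i_{-6}e^j}$, $\overline{f^i_{-6}f^j}$ in one line each. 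For $p=7$, Corollary \ref{cor} collapses each diagonal $\overline{e^i_{-7}f^i}$ to the listed class $\zeta:=\overline{e^1_{-7}f^1}$; for the off-diagonals I evaluate $X_{ij}\zeta$, $Y_{ij}\zeta$, $Z_{ij}\zeta$ in two ways (once using $\zeta=\overline{e^i_{-7}f^i}$, once using $\zeta=\overline{e^j_{-7}f^j}$) and match the answers via \eqref{bmn2} at sign $(-1)^7=-1$, which forces each of $\overline{e^i_{-7}f^j}$ ($i\ne j$), $\overline{e^i_{-7}e^j}$ and $\overline{f^i_{-7}f^j}$ ($i<j$) to equal its own negative, hence to vanish.

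For $p\ge 8$, I bootstrap by \eqref{efef} with $m=1$, $k=p-2$: the identity reads $\overline{h^{ii}}\cdot\overline{e^i_{-(p-2)}f^i}=c_p\,\overline{e^i_{-p}f^i}$ for an explicit nonzero rational $c_p$. The base cases $\overline{e^i_{-6}f^i}=\overline{0}$ (from the $p=6$ step) and $\overline{e^i_{-9}f^i}=\overline{0}$ (from \eqref{e9}, which also gives $\overline{h^{ii}}\cdot\zeta=\overline{0}$ through the same \eqref{efef}) feed an immediate two-step induction showing $\overline{e^i_{-p}f^i}=\overline{0}$ for every $p\ge 8$; spreading these vanishings with $X_{ij}$, $Y_{ij}$, $Z_{ij}$ exactly as at $p=6$ kills the off-diagonal canonical forms at each depth. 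The main obstacle will be the sign bookkeeping at $p=7$: the two evaluations of $X_{ij}\zeta$, etc., must be reconciled with \eqref{bmn2} at sign $-1$, and one has to verify that the small basis $\{X_{ij},Y_{ij},Z_{ij},U_i,V_i,H_i\}$ of $\mathfrak{sp}(2d)$ from Subsection \ref{subsec:auto} actually reaches every off-diagonal canonical form in a single step; once that is done, the rest is routine iteration of \eqref{efef} and the derivation trick.
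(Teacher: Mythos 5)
Your proposal is correct and follows essentially the same route as the paper: reduction to canonical forms via \eqref{bmn1}--\eqref{bmn2}, the step-by-two recursion obtained by multiplying by $\overline{h^{ii}}$ (your use of \eqref{efef} with $m=1$ is the same relation the paper derives from $h^{ii}_{-1}(e^i_{-k}x)$), the seeds \eqref{e6} and \eqref{e9}, Corollary \ref{cor} at depth $7$, and $\mathfrak{sp}(2d)$-derivations to spread the diagonal vanishings to the remaining canonical forms. Your explicit handling of the depth-$7$ off-diagonal classes---applying $X_{ij},Y_{ij},Z_{ij}$ to the relation $\overline{e^i_{-7}f^i}=\overline{e^j_{-7}f^j}$ and matching against \eqref{bmn2}---usefully spells out a step the paper leaves implicit; the only loose end is the diagonal classes $\overline{e^i_{-p}e^i}$, $\overline{f^i_{-p}f^i}$ for even $p\ge 8$, which need one application of $U_i$ or $V_i$ (or the paper's direct recursion on $e^i_{-k}e^i$ starting from \eqref{e6}).
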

Notice that the first row lies in $B_d^1$ and that other rows lie in $B_d^2$.
As we already noticed in the subsection \ref{subsec:lem}, relations \eqref{bmn1} and \eqref{bmn2} imply that every monomial of length 2 in $SF(d)^+$ can
be written (modulo $C_2$) as a linear combination of vectors
\begin{align*}
  x^i_{-n}x^j, &\text{ where } n \in \mathbb Z_{>0} \text{ is odd and }   1 \leq i < j \leq 2d, \\
  x^i_{-n}x^j, &\text{ where } n \in \mathbb Z_{>0} \text{ is even and }   1 \leq i \leq j \leq 2d.
\end{align*}

We need to remove the monomials with higher $n$ from the list.

\begin{lem}
  For $1 \leq i \leq d$ we have:
  \begin{align*}
    e^i_{-k}e^i &\equiv 0 \mod C_2, \text{ for } k \geq 6 \\
    e^i_{-k}f^i &\equiv 0 \mod C_2, \text{ for } k = 6 \text{ or } k \geq 8.
  \end{align*}
\end{lem}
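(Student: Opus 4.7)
My plan is to prove both parts of the lemma in tandem, since they turn out to be interlinked. The tools I would use are the skew-symmetry \eqref{bmn2}, the base vanishings \eqref{e6} and \eqref{e9}, the multiplicative relation \eqref{efef}, and, crucially, the $\mathfrak{sp}(2d)$-action on $SF(d)^+$ by derivations from subsection \ref{subsec:auto}.

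The easy cases come first. For $k$ odd, \eqref{bmn2} applied to $B_{k,1}(e^i,e^i)=\tfrac{1}{k}e^i_{-k}e^i$ gives $\overline{e^i_{-k}e^i}=(-1)^k\,\overline{e^i_{-k}e^i}$, forcing $\overline{e^i_{-k}e^i}=0$. The case $k=6$ of part (a) is \eqref{e6} itself, and $k=9$ of part (b) is \eqref{e9}. The remaining cases are handled by a bootstrap whose engine is the following observation: the elements $U_i, V_i\in\mathfrak{sp}(2d)$ act as derivations on $SF(d)^+$, hence preserve $C_2$ and descend to $\mathcal{P}(SF(d)^+)$. A direct computation gives $U_i(e^i_{-k}f^i)=e^i_{-k}e^i$ and $V_i(e^i_{-k}e^i)=f^i_{-k}e^i+e^i_{-k}f^i$; invoking \eqref{bmn2} to replace $f^i_{-k}e^i$ by $e^i_{-k}f^i$ modulo $C_2$ (valid for $k$ even) shows $V_i\bigl(\overline{e^i_{-k}e^i}\bigr)=2\,\overline{e^i_{-k}f^i}$. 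Thus for $k$ even the vanishing of either family is equivalent to that of the other.

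With this bridge in hand, applying $V_i$ to \eqref{e6} yields $\overline{e^i_{-6}f^i}=0$, which is the $k=6$ case of part (b). Then \eqref{efef} with $m=1$ and $n=k-2$ gives $\overline{h^{ii}}\cdot\overline{e^i_{-(k-2)}f^i}=\tfrac{(k-2)(k+1)}{2}\,\overline{e^i_{-k}f^i}$, a relation with nonzero coefficient which, by induction, propagates the vanishing to all even $k\ge 8$. For odd $k\ge 11$, take $m=k-10$ and $n=9$ in \eqref{efef}: the right-hand side is zero by \eqref{e9}. Finally, applying $U_i$ to the now-established $\overline{e^i_{-k}f^i}=0$ gives $\overline{e^i_{-k}e^i}=0$ for all even $k\ge 8$, closing part (a). The principal obstacle is that the relations \eqref{bmn1}, \eqref{bmn2}, \eqref{efef} each connect either the $e^i_{-k}e^i$ family to itself or the $e^i_{-k}f^i$ family to itself, but not across the two; the $\mathfrak{sp}(2d)$-symmetry supplies the missing cross-link and lets the base cases \eqref{e6} and \eqref{e9} propagate to the whole lemma.
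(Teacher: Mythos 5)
Your proof is correct and follows essentially the same route as the paper: the step-by-two recursion is driven by multiplication by $\overline{h^{ii}}$ (your use of \eqref{efef} with $m=1$ is exactly the paper's computation of $h^{ii}_{-1}(e^i_{-k}f^i)$), the base cases are \eqref{e6}, \eqref{e9} and the odd-$k$ skew-symmetry from \eqref{bmn2}, and the cross-link between the $e^ie^i$ and $e^if^i$ families is supplied by the derivations $V_i$ (and, in your variant, $U_i$), just as in the paper. Your handling of odd $k\geq 11$ by pairing with $\overline{e^i_{-9}f^i}$ rather than stepping by two from $k=9$, and your derivation of the even-$k$ part of the first family from the second via $U_i$ rather than iterating $h^{ii}_{-1}(e^i_{-k}e^i)$ directly, are only cosmetic differences.
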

\begin{proof}
  First, using the relation \eqref{bmn1} we get:
  
    \begin{align*}
    h^{ii}_{-1}(e^i_{-k}e^i) &= k e^i_{-k-2}e^i + e^i_{-k}e^i_{-3}\mathbf 1 \\
                             &\equiv \left(k + {k + 1 \choose 2} \right) e^i_{-k-2}e^i \mod C_2\\
    h^{ii}_{-1}(e^i_{-k}f^i) &= k e^i_{-k-2}f^i + e^i_{-k}f^i_{-3}\mathbf 1 \\
                             &\equiv \left(k + {k + 1 \choose 2} \right) e^i_{-k-2}f^i \mod C_2
  \end{align*}

  It follows that
  \begin{align}
    e^i_{-k}e^i \in C_2 &\implies e^i_{-k-2}e^i \in C_2 \label{eq:m21}\\
    e^i_{-k}f^i \in C_2 &\implies e^i_{-k-2}f^i \in C_2 \label{eq:m22}
  \end{align}
  From relation \eqref{bmn2} we know that $e^i_{-2k-1}e^i \in C_2, k \geq 0$,
  and we have a relation \eqref{e6} which says that $e^i_{-6}e^i \in C_2$.
  From \eqref{eq:m21} it follows that $e^i_{-k}e^i \in C_2$, for $k \geq 6$.

  We can use $V_i \in \mathfrak{sp}(2d)$ to get
  \begin{align*}
    C_2 \ni V_i(e^i_{-6}e^i) &= f^i_{-6}e^i + e^i_{-6}f^i \\
                   & \stackrel{\eqref{bmn2}}{\equiv} 2e^i_{-6}f^i \mod C_2,
  \end{align*}
  and then proceed in similar fashion (together with the relation \eqref{e9}).
\end{proof}

Using $\mathfrak{sp}(2d)$ on the relations from the previous Lemma gives us
everything but the last row in Proposition \ref{propd2}.
For the last row, we use the Corollary \ref{cor}. 

\subsection{Monomials of length greater than 2}
Following \cite{Abe}, we define $\mathcal L^0 SF(d) = \mathbb C \mathbf 1$, and for $r \in \mathbb Z_{> 0}$
$$\mathcal L^r SF(d) = \sspan_\mathbb C \{x^{i_1}_{-n_1} \ldots x^{i_s}_{-n_s} \mathbf 1 : 1 \leq i_j \leq 2d, n_j \in \mathbb Z_{> 0}, s \leq r\}. $$
We also put $\mathcal L^r SF(d)^+ = \mathcal L^r SF(d) \cap SF(d)^+$.
In this section, we will only use $\mathcal L^r SF(d)^+$, so we will
write $\mathcal L^{r+} := \mathcal L^r SF(d)^+$ for simplicity.

We can generalize Abe's definition \eqref{eq:Bdef} to
\begin{equation}
  B_{m_1, \ldots, m_k}(a^1, \ldots, a^k) := \frac{(m_1 - 1)! (m_2 - 1)! \ldots (m_k - 1)! }{(\sum m_i - 1)!} a^1_{-m_1} a^2_{-m_2} \ldots a^k_{-m_k} \mathbf 1.
\end{equation}
Similar to $k=2$ case, we have
$$L_{-1}B_{m_1, \ldots, m_k}(a_1, \ldots, a_k) = (\sum_{i = 1}^k m_i) \cdot \sum_{i = 1}^k B_{m_1, \ldots, m_i + 1, \ldots, m_k}(a_1, \ldots, a_k) \in C_2$$
Iterating this relation, we get the following Lemma:
\begin{lem}\label{lemM1}
  Each element of $SF^+(d)$ can be written (modulo $C_2$)
  as a linear combination of monomials of the form
  $$x^{i_1}_{-n_1} \ldots x^{i_{2k}}_{-n_{2k}}\mathbf 1, k \geq 0, n_j > 0, 1 \leq i_j \leq 2d, $$
  where at least one of $n_j$ equals $1$.
\end{lem}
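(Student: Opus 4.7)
The plan is to combine the displayed identity $L_{-1}B_{m_1,\ldots,m_k}(a_1,\ldots,a_k) \in C_2$ with a short induction on the smallest mode appearing in a monomial. That identity itself is a consequence of the general fact that $L_{-1}v = v_{-2}\mathbf 1 \in C_2$ for every $v \in SF(d)^+$, together with the commutator rule $[L_{-1}, a_n] = -n\, a_{n-1}$ expanded via the Leibniz rule. Since $SF(d)^+$ is the even part of the VOSA $SF(d)$, which is strongly and freely generated by the odd vectors $x^1,\ldots,x^{2d}$, every vector in $SF(d)^+$ is automatically a linear combination of monomials $x^{i_1}_{-n_1}\cdots x^{i_{2k}}_{-n_{2k}}\mathbf 1$ of even length, so the real content of the lemma is the additional claim that at least one $n_j$ can be taken equal to $1$.

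Fixing the length $2k$ and writing such a monomial as a nonzero scalar multiple of $B_{n_1,\ldots,n_{2k}}(x^{i_1},\ldots,x^{i_{2k}})$, I would induct on $m := \min_j n_j$. The base case $m = 1$ is trivial. For $m \geq 2$, I would pick a position $i$ with $n_i = m$ and apply the $L_{-1}$-identity to $B_{n_1,\ldots,n_i-1,\ldots,n_{2k}}$, which is legitimate since $n_i-1 \geq 1$. The $l=i$ summand on the right-hand side restores the original $B_{n_1,\ldots,n_{2k}}$, and since the scalar $\sum_j n_j - 1$ is nonzero whenever $2k \geq 2$, I may divide through to obtain
$$B_{n_1,\ldots,n_{2k}} \equiv -\sum_{l \neq i} B_{n_1,\ldots,n_l+1,\ldots,n_i-1,\ldots,n_{2k}} \pmod{C_2}.$$
In each summand on the right, position $i$ now carries the value $m-1$, position $l$ carries $n_l+1 \geq m+1$, and all remaining positions are unchanged, so the minimum index strictly drops from $m$ to $m-1$. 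The inductive hypothesis then rewrites each summand as a linear combination of monomials with some index equal to $1$.

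I do not foresee any serious obstacle. The even length $2k$ is preserved throughout because $L_{-1}$ acts as a parity-preserving derivation on modes, and the induction terminates in at most $m-1$ steps. The only point requiring care is the strict decrease of the minimum, verified above by comparing the new values $m-1$ (at position $i$), $n_l+1 \geq m+1$ (at position $l$), and the unchanged entries (each $\geq m$).
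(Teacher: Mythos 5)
Your argument is correct and is essentially the paper's own: the paper derives the identity $L_{-1}B_{m_1,\ldots,m_k}(a_1,\ldots,a_k)=(\sum_i m_i)\sum_i B_{m_1,\ldots,m_i+1,\ldots,m_k}(a_1,\ldots,a_k)\in C_2$ and then simply says the lemma follows by ``iterating this relation.'' Your induction on $m=\min_j n_j$, with the observation that the $l=i$ term restores the original monomial and that the remaining terms all have minimum exactly $m-1$, is precisely the iteration the authors leave implicit, so there is nothing to add.
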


We can improve on this Lemma:
\begin{lem}\label{lemM2}
  Each element of $SF^+(d)$ can be written (modulo $C_2$)
  as a linear combination of monomials of the form
  \begin{equation} \label{form}
  x^{i_1}_{-n}x^{i_2}_{-1} \ldots x^{i_{2k}}_{-1}\mathbf 1, k \geq  0, n > 0, 1 \leq i_j \leq 2d  
  \end{equation}
  
\end{lem}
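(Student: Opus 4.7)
The plan is to combine Lemma~\ref{lemM1} with an inductive procedure based on the identity $L_{-1}N \in C_2$ for $N \in SF(d)^+$, exploiting the free anti-commutativity of the odd modes.

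By Lemma~\ref{lemM1}, it suffices to show that every monomial $M = x^{i_1}_{-n_1}\cdots x^{i_{2k}}_{-n_{2k}}\mathbf{1}$ with at least one $n_j = 1$ reduces modulo $C_2$ to a combination of monomials of the desired form. A key preliminary is that for $m, l \geq 1$ the super-anti-commutator $\{x^a_{-m}, x^b_{-l}\}$ vanishes, since $\{e^i_m, f^j_l\} = -m\delta_{ij}\delta_{m+l,0}$ and analogous formulas give zero whenever $m + l \neq 0$. Hence the operators of $M$ may be permuted freely up to an overall sign, and we may arrange $n_1 = \max_j n_j$.

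I would then induct on $S(M) := \sum_{j=2}^{2k}(n_j - 1)$. The base case $S(M) = 0$ gives a monomial of the desired form. For the inductive step with $S(M) \geq 1$, place a position with $n_j \geq 2$ at position $2$, and let $N$ be obtained from $M$ by lowering $n_2$ to $n_2 - 1$; then $N \in SF(d)^+$ and the identity $L_{-1}N \in C_2$ expands to
\begin{equation*}
  n_1 \overline{M'} + (n_2 - 1)\overline{M} + \sum_{l \geq 3} n_l \overline{M_l} \equiv 0 \pmod{C_2},
\end{equation*}
where $M'$ is $M$ with $n_1 \to n_1 + 1$ and $n_2 \to n_2 - 1$ (so $S(M') = S(M) - 1$), and each $M_l$ is $M$ with $n_2 \to n_2 - 1$ and $n_l \to n_l + 1$. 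By the inductive hypothesis applied to $M'$, the term $\overline{M'}$ already reduces to the desired form.

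The hard part will be that $S(M_l) = S(M)$, so the excess has merely been redistributed from position~$2$ to position~$l$, and a naive induction on $S$ does not strictly descend. My plan is to close the induction by iterating the construction with different choices of auxiliary position and combining with further relations in $C_2$ coming from $a_{-2}b$ for length-$2$ elements $a \in V$ with $\mathrm{wt}(a) \geq 2$; a direct linear-algebra check in the case $2k = 4$ confirms that the $L_{-1}$-relations alone are not of full rank among the $2$-bad monomials of fixed weight, so these extra relations are essential. The $Sp(2d)$-invariance of $C_2$ recalled in Subsection~\ref{subsec:auto} organizes the bookkeeping, and together these ingredients systematically express every $\overline{M_l}$ modulo $C_2$ as a combination of desired-form monomials, completing the induction.
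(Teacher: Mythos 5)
Your proposal has a genuine gap at its central step, and you essentially acknowledge it yourself. The induction on the excess $S(M)=\sum_{j\ge 2}(n_j-1)$ does not close: solving the relation $L_{-1}N\in C_2$ for $\overline{M}$ expresses it through $\overline{M'}$ (where $S$ drops by one) \emph{and} through the terms $\overline{M_l}$ with $S(M_l)=S(M)$, so the excess is merely shuffled to another position. You correctly observe that the $L_{-1}$-relations alone are not of full rank among the monomials with two or more non-unit modes, and then state that the induction will be closed by ``combining with further relations in $C_2$ coming from $a_{-2}b$'' — but you never identify which relations these are, nor verify that they suffice. That verification is precisely the mathematical content of the lemma; asserting that the bookkeeping ``systematically'' works out is not a proof. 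As written, the argument establishes only the reduction already contained in Lemma~\ref{lemM1} plus a reformulation of the remaining problem.

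For comparison, the paper's proof avoids the redistribution problem entirely by inducting on the \emph{length} $2k$ of the monomial rather than on the excess. The key identity is the associativity (iterate) formula: for a length-two element $a=x^{i_{2k-1}}_{-m}x^{i_{2k}}$ one has
\begin{equation*}
  a_{-1}\bigl(x^{i_1}_{-n_1}\cdots x^{i_{2k-2}}_{-n_{2k-2}}\mathbf 1\bigr)
  = x^{i_{2k-1}}_{-m}x^{i_{2k}}_{-1}x^{i_1}_{-n_1}\cdots x^{i_{2k-2}}_{-n_{2k-2}}\mathbf 1 + u,
  \qquad u\in\mathcal L^{(2k-2)+},
\end{equation*}
so a length-$2k$ monomial equals a $(-1)$-product of a weight-two generator with a length-$(2k-2)$ monomial, modulo shorter terms. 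Since $(-1)$-multiplication preserves $C_2$, one may apply the inductive hypothesis to the shorter factor, first to reduce to monomials with at most two non-unit modes, and then a second application of the same trick (now peeling off a $(-1)$-pair) pushes both non-unit modes into a single length-$(2k-2)$ factor, to which the hypothesis applies again. Reattaching the $(-1)$-pair does not spoil the form \eqref{form}, because the negative modes anticommute. If you want to salvage your approach, the missing relations you allude to are exactly these $a_{-1}b$ identities for length-two $a$; but you must actually run the argument, and the natural induction variable for doing so is the length, not $S(M)$.
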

\begin{proof}
  We use the induction on the monomial length $2k$. For $k = 0$ the result is obvious, and for $k=1$ it follows from the Proposition \ref{propd2}.
  Let $k \geq 2$: \\
  Because of the Lemma \ref{lemM1} it is enough to consider the monomials
  of the form
  $$a = x^{i_1}_{-n_1} \ldots x^{i_{2k-1}}_{-n_{2k-1}} x^{i_{2k}}.$$
  We calculate, using the associativity formula (see \cite{LL}):
  \begin{align*}
    &(x^{i_{2k-1}}_{-n_{2k-1}}x^{i_{2k}})_{-1} (x^{i_1}_{-n_1} \ldots x^{i_{2k-2}}_{-n_{2k-2}} \mathbf 1) \\
    &= \sum_{j \geq 0} (-1)^j {-n_{2k-1} \choose j} \left(x^{i_{2k-1}}_{-n_{2k-1} - j}x^{i_{2k}}_{-1 + j} + (-1)^{-n_{2k-1}} x^{i_{2k}}_{-n_{2k-1} -1 - j}x^{i_{2k-1}}_{j} \right) (x^{i_1}_{-n_1} \ldots x^{i_{2k-2}}_{-n_{2k-2}} \mathbf 1) \\
    &= x^{i_{2k-1}}_{-n_{2k-1}}x^{i_{2k}}_{-1}x^{i_1}_{-n_1} \ldots x^{i_{2k-2}}_{-n_{2k-2}} \mathbf 1 + u, u \in \mathcal L^{(2k-2)+} \\
  &= a + u, u \in \mathcal L^{(2k-2)+}.
  \end{align*}

  Now, we can use the inductive hypothesis on $(x^{i_1}_{-n_1} \ldots x^{i_{2k-2}}_{-n_{2k-2}} \mathbf 1)$ and the fact that ($-1$)-multiplication preserves
  the $C_2$ space to conclude that we can restrict ourselves to the
  monomials of the form
  $$b = x^{i_1}_{-n_1}x^{i_2}_{-n_2}x^{i_3}_{-1}x^{i_4}_{-1} \ldots x^{i_{2k-1}}_{-1} x^{i_{2k}}.$$
  We can use a similar trick to write
  $$b = (x^{i_{2k-1}}_{-1}x^{i_{2k}})_{-1} (x^{i_1}_{-n_1}x^{i_2}_{-n_2} x^{i_3}_{-1} \ldots x^{i_{2k-2}}) + u, u \in \mathcal L^{(2k-2)+} $$
  and use the inductive hypothesis on $(x^{i_1}_{-n_1}x^{i_2}_{-n_2} x^{i_3}_{-1} \ldots x^{i_{2k-2}})$, proving our result.
\end{proof}

Next step is to check that the monomials of the form \eqref{form} reside in $\sspan_{\mathbb C} B_d + C_2$.
We will first have to prove this for monomials of length $4$ and then proceed with an inductive argument.

\begin{lem}[Technical Lemma] \label{lemd4}
  Monomials of length 4 are
  in $\sspan_{\mathbb C} B_d + C_2$.
\end{lem}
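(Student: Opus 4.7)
The plan is to combine Lemma~\ref{lemM2} with Proposition~\ref{propd2}. By Lemma~\ref{lemM2}, any length-$4$ monomial in $SF(d)^+$ reduces, modulo $C_2$, to a linear combination of vectors
$$v_n := x^{i_1}_{-n}\,x^{i_2}_{-1}\,x^{i_3}_{-1}\,x^{i_4}_{-1}\mathbf 1, \qquad n\ge 1,$$
so it suffices to treat these. The case $n=1$ is immediate: $v_1$ has all modes equal to $-1$ and therefore lies in $B_d^1\subseteq B_d$.

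For $n\ge 2$ I plan to apply the associativity formula, exactly as in the proof of Lemma~\ref{lemM2}, writing
$$v_n = \bigl(x^{i_1}_{-n}\,x^{i_2}\bigr)_{-1}\bigl(x^{i_3}_{-1}\,x^{i_4}_{-1}\mathbf 1\bigr) + u, \qquad u\in\mathcal L^{2+},$$
where $u$ is of length $\le 2$ and therefore already in $\sspan B_d + C_2$ by Proposition~\ref{propd2}. Since $C_2$ is stable under $(-1)$-multiplication, the leading term can be simplified by replacing the even length-$2$ factor $x^{i_1}_{-n}\,x^{i_2}$, modulo $C_2$, by a linear combination of the weight-$(n+1)$ spanning vectors produced by Proposition~\ref{propd2}, namely vectors of the form $x^{a}_{-m}\,x^{b}$ with $m\in\{2,3,4,5,7\}$ drawn from $B_d^2$. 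The proof thus reduces to analysing the finitely many products $\bigl(x^{a}_{-m}\,x^{b}\bigr)_{-1}\bigl(x^{i_3}_{-1}\,x^{i_4}_{-1}\mathbf 1\bigr)$, $m\in\{2,3,4,5,7\}$, and showing that each lies in $\sspan B_d + C_2$. For this I would combine anticommutativity of creation modes (any $\{x^{a}_m, x^{b}_{m'}\}$ with $m+m'\ne 0$ vanishes, thanks to the $\lambda$-brackets $[e^i_\lambda f^j]=-\lambda\delta_{i,j}$ and $[e^i_\lambda e^j]=[f^i_\lambda f^j]=0$) with the $L(-1)$-reduction trick from subsection~\ref{subsec:lem} and the explicit identities \eqref{efef}, \eqref{e6}, \eqref{e9}, iteratively pushing the high mode downward or collapsing the monomial to length $\le 2$.

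The main obstacle is that $C_2(SF(d)^+)$ consists of $u_{-2}v$ only for \emph{even} $u,v\in SF(d)^+$: one cannot naively place $(x^{a})_{-2}(\cdots)$ with $x^{a}$ odd into $C_2$ even though the whole monomial is even, so odd creation modes must first be grouped into even blocks before any $C_2$-membership can be invoked. Tracking this subtlety through each associativity expansion---especially in the tight case $m=7$ corresponding to the isolated generator $e^1_{-7}f^1$, where no further $L(-1)$-cascade is available to reduce the mode---is the technical heart of the argument.
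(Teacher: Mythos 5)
Your opening reduction is sound and is in fact a cleaner way to bound the mode than the paper's route: writing $v_n=(x^{i_1}_{-n}x^{i_2})_{-1}(x^{i_3}_{-1}x^{i_4}_{-1}\mathbf 1)+u$ with $u\in\mathcal L^{2+}$, using that $C_2$ is an ideal for the $(-1)$-product, and invoking Proposition~\ref{propd2} on the even factor $x^{i_1}_{-n}x^{i_2}$ legitimately reduces the infinitely many values of $n$ to $m\in\{2,3,4,5,7\}$ (the paper instead fixes the indices first, via the $Sp(2d)$-automorphisms, and runs an induction on $n$ inside each case). The problem is that everything after this reduction is a plan rather than a proof, and the finitely many bounded-mode monomials $x^{a}_{-m}x^{b}_{-1}x^{c}_{-1}x^{d}_{-1}\mathbf 1$ are precisely where the entire content of the lemma lives: the paper spends all of subsection~\ref{subsec:tl} on them, in a three-case analysis over how the indices coincide.

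More seriously, the toolkit you name for these cases cannot close them. Anticommutativity of the modes only reorders a monomial; the $L(-1)$-relation applied to $x^{i_1}_{-1}\cdots x^{i_4}_{-1}\mathbf 1$ produces a single relation $\sum_j \pm\, x^{i_j}_{-2}(\cdots)\in C_2$ among four distinct monomials, which by itself does not place any one of them in $C_2$; and the identities \eqref{efef}, \eqref{e6}, \eqref{e9} concern only monomials built from $e^i,f^i$ with a single index $i$, so they say nothing about the mixed-index cases such as $e^1_{-2}e^2_{-1}e^3_{-1}e^4_{-1}\mathbf 1$ or $e^1_{-3}f^1_{-1}e^2_{-1}e^3$. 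You correctly identify the obstacle (odd modes must be grouped into even blocks before $C_2$-membership can be used), but the device that actually overcomes it is missing from your outline: one manufactures explicit elements of $C_2$ of the form $u_{-2}v$ and $u_{-3}v$ with $u,v$ among the weight-two generators $e^{ij},f^{ij},h^{ij}$ (e.g.\ $e^{13}_{-2}e^{12}$, $e^{12}_{-3}h^{11}$), expands them by the iterate formula to isolate the target length-$4$ monomial modulo $\mathcal L^{2+}$, and then propagates these relations with the zero modes of the $\mathfrak{sp}(2d)$-action ($h^{ii}_0$, $V_i$, permutations) to reach the remaining index patterns and the higher modes, with the single-index identities \eqref{efef}--\eqref{e9} and Corollary~\ref{cor} entering only in the residual subcases (including $m=7$). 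Without supplying these computations your argument does not establish the statement.
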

\begin{proof}
  See subsection \ref{subsec:tl}.
\end{proof}

Now, we can finish the proof of Theorem \ref{Bd}.

\begin{prop} \label{fprop}
  All monomials in $SF(d)^+$ of length greater or equal to 4 lie in
  $$\sspan_{\mathbb C} B_d + C_2.$$
\end{prop}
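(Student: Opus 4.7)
The plan is to run an induction on the monomial length $2k$, with the length-$4$ case (the Technical Lemma) serving as the key base step, and to reduce longer monomials to products of a length-$4$ factor and a length-$(2k-4)$ factor of the form $x^{i_1}_{-1}\cdots x^{i_{2k-4}}_{-1}\mathbf 1$.

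First, by Lemma \ref{lemM2}, it suffices to treat monomials of the form
$a = x^{i_1}_{-n}x^{i_2}_{-1}\cdots x^{i_{2k}}_{-1}\mathbf 1$. Set
$u = x^{i_1}_{-n}x^{i_2}_{-1}x^{i_3}_{-1}x^{i_4}_{-1}\mathbf 1 \in \mathcal L^{4+}$
and
$w = x^{i_5}_{-1}\cdots x^{i_{2k}}_{-1}\mathbf 1 \in \mathcal L^{(2k-4)+}$.
An iterated application of the associativity formula for vertex (super)algebras, exactly as in the proof of Lemma \ref{lemM2}, yields $u_{-1}w = a + r$ with $r \in \mathcal L^{(2k-2)+}$. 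By the inductive hypothesis, $r$ already lies in $\sspan_{\mathbb C}B_d + C_2$, so the task reduces to showing the same for $u_{-1}w$.

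Now the Technical Lemma gives $u = u' + v$ with $u' \in \sspan_{\mathbb C}B_d$ and $v \in C_2$. Because the $(-1)$-product descends to the commutative product of the Poisson algebra $\mathcal P(SF(d)^+) = SF(d)^+/C_2$, the subspace $C_2$ is stable under $(-1)$-multiplication, so $v_{-1}w \in C_2$. It remains to analyze $u'_{-1}w$, where $u'$ is a linear combination of $B_d$-elements of length at most $4$. For a length-$0$ or length-$2$ summand $b$ (whether of type $B_d^1$ or $B_d^2$), the element $b_{-1}w$ lies in $\mathcal L^{(2k-2)+}$ and is handled by the inductive hypothesis. For a length-$4$ summand $b \in B_d^1$, written as $b = x^{j_1}_{-1}x^{j_2}_{-1}x^{j_3}_{-1}x^{j_4}_{-1}\mathbf 1$, associativity again gives $b_{-1}w = x^{j_1}_{-1}x^{j_2}_{-1}x^{j_3}_{-1}x^{j_4}_{-1}w + r'$ with $r' \in \mathcal L^{(2k-2)+}$; the leading term is an all-$(-1)$ monomial of length $2k$, i.e.\ (up to the sign arising from reordering odd generators) a member of $B_d^1$, while $r'$ is absorbed by the inductive hypothesis.

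The main bookkeeping burden is the last step: carefully justifying that every correction produced by iterated associativity strictly decreases the length filtration $\mathcal L^{r+}$, so that the inductive hypothesis genuinely applies to all residual terms. Everything else is structurally straightforward once the closure of $C_2$ under $(-1)$-multiplication and the Technical Lemma are in hand; in particular, higher-mode length-$2$ elements from $B_d^2$ never have to be propagated to length $\geq 4$, because they appear only as the length-$2$ constituents of $u'$ and are immediately absorbed into lower-length terms after the $(-1)$-multiplication with $w$.
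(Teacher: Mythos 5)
Your proof follows the same inductive skeleton as the paper's --- induction on the length, reduction via Lemma \ref{lemM2} to monomials $x^{i_1}_{-n}x^{i_2}_{-1}\cdots x^{i_{2k}}_{-1}\mathbf 1$, the Technical Lemma as base case, and the fact that $C_2$ is an ideal for the $(-1)$-product --- but the decomposition in the inductive step is genuinely different. The paper peels off the \emph{rightmost length-$2$ all-$(-1)$ factor}, writing $a = (x^{i_{2k-1}}_{-1}x^{i_{2k}})_{-1}(x^{i_1}_{-n}x^{i_2}_{-1}\cdots x^{i_{2k-2}}_{-1}\mathbf 1) + u$ with $u \in \mathcal L^{(2k-2)+}$, and applies the \emph{inductive hypothesis} to the length-$(2k-2)$ factor carrying the mode $-n$; the length-$4$ case then enters only because $(-1)$-multiplying a $B_d^2$ element by a length-$2$ element lands in length $\leq 4$. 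You instead peel off the \emph{length-$4$ factor carrying the mode $-n$} and apply the \emph{Technical Lemma} to it directly, then $(-1)$-multiply by the all-$(-1)$ tail $w$. Both routes work; the paper only ever needs the associativity expansion of $(x^a_{-m}x^b)_{-1}$ for a length-$2$ element, whereas you need the iterated expansion of $u_{-1}w$ for a length-$4$ element $u$, a heavier (though routine, free-field) computation. In exchange, your version makes the role of the Technical Lemma more transparent.

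One step needs to be shored up. You assert that the Technical Lemma expresses $u$ modulo $C_2$ as a combination of $B_d$-elements \emph{of length at most $4$}. The statement of Lemma \ref{lemd4} only gives $u \in \sspan_{\mathbb C}B_d + C_2$, and $B_d^1$ contains all-$(-1)$ monomials of every even length up to $2d$; since everything is graded by conformal weight, a length-$4$ monomial of weight $2\ell$ (e.g.\ $n=3$, weight $6$) could a priori have a component on a length-$2\ell$ element of $B_d^1$ with $2\ell > 4$. If such a component occurred, $b_{-1}w$ would produce correction terms of length $2k$ rather than $2k-2$, and your induction would not close. So you must either verify the length-$\leq 4$ refinement of the Technical Lemma (its proof does deliver it: every reduction there lands either in $C_2 + \mathcal L^{2+}$, which Proposition \ref{propd2} controls by length-$\leq 2$ elements of $B_d$, or directly in a length-$4$ element of $B_d^1$), or run the entire induction with the strengthened statement that every monomial of length $2m$ lies in $C_2$ plus the span of $B_d$-elements of length $\leq 2m$. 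The paper's own proof quietly relies on the same strengthening when it says $x$ is a combination of $B_d$-monomials ``of length less than $2k$''. With that refinement made explicit, your argument is complete.
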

\begin{proof}
  We prove this Proposition by the induction on the length of monomials.
  The base case (monomials of length 4) is proven in previous Lemmas.
  Now, assume that all monomials of length less than $2k$ lie in
  $\sspan_{\mathbb C} B_d + C_2$.
  Because of the Lemma \ref{lemM2} we can reduce to monomials of
  the form
  $$ x^{i_1}_{-n}x^{i_2}_{-1} \ldots x^{i_{2k}}_{-1}\mathbf 1, n > 0, 1 \leq i_j \leq 2d. $$
  We can write
  $$x^{i_1}_{-n}x^{i_2}_{-1} \ldots x^{i_{2k}}_{-1}\mathbf 1 = (x^{i_{2k-1}}_{-1}x^{i_{2k}})_{-1} (x^{i_1}_{-n}x^{i_2}_{-1} \ldots x^{i_{2k-2}}_{-1}\mathbf 1) + u, u \in \mathcal L^{(2k-2)+}.$$
  By the inductive hypothesis,
  $$x = x^{i_1}_{-n}x^{i_2}_{-1} \ldots x^{i_{2k-2}}_{-1}\mathbf 1 \in \sspan_{\mathbb C} B_d + C_2.$$
  This means that we can write $x$ (modulo $C_2$) as a linear combination of monomials
  from $B_d$ of length less than $2k.$
  Let $m_1 \in B_d^1$ of length $<2k$. Then, $(x^{i_{2k-1}}_{-1}x^{i_{2k}})_{-1}m_1$
  will be a linear combination of a monomial from $B_d^1$ of length $2k$ and some monomials
  of length $< 2k$.

  Let $m_2 \in B_d^2$. Then $(x^{i_{2k-1}}_{-1}x^{i_{2k}})_{-1}m_2$ will be
  a linear combination of monomials of length $\leq 4$, and that case is
  already covered.
\end{proof}

\subsection{Proof of the Technical Lemma} \label{subsec:tl}

In this section we prove the Technical Lemma \ref{lemd4}, that is,
all monomials of length $4$ lie in $\sspan_{\mathbb C} B_d + C_2$.

Recall the automorphisms $\tau_i$ and the permutation automorphisms
from subsection \ref{subsec:auto}.
 If we take a monomial of the form \eqref{form} of length 4, we can use
them to write it as $e_{-n}^1 x^{i_1}_{-1}x^{i_2}_{-1}x^{i_3}.$
We split this problem into several cases, depending on how many
of $i_1, i_2, i_3$ equal $1$.
We immediately see that we can't have a non-zero monomial
where $i_1 = i_2 = i_3 = 1$. So, we start with:

  \paragraph{\textbf{Case 1}: Exactly two of $i_1, i_2, i_3$ equal $1$} \mbox{}\newline
  In this case, without loss of generality, we can take that
  monomial is of the form $e_{-n}^1 e^1_{-1}f^1_{-1}x^{i_3}$, where $i_3 \neq 1$.
  Using the automorphisms, it is enough to consider the monomial
  $e_{-n}^1 e^1_{-1}f^1_{-1}e^2$.
  Notice that
  \begin{align*}
    e^{12}_{-2}h^{11} &= e^1_{-2}e^2_{-1}e^1_{-1}f^1 + u, u \in \mathcal L^{2+} \\
    e^{12}_{-3}h^{11} &= (e^1_{-3}e^2_{-1} + e^1_{-2}e^2_{-2})e^1_{-1}f^1 + v, v \in \mathcal L^{2+}
  \end{align*}
  The first row shows that  $e_{-2}^1 e^1_{-1}f^1_{-1}e^2 \in C_2 + L^{2+}$,
  and we know that $\mathcal L^{2+} \subseteq \sspan_{\mathbb C}B_d + C_2$ by Proposition \ref{propd2}.
  Because of
  $$h^{22}_0 e^1_{-2}e^2_{-1}e^1_{-1}f^1 = e^1_{-2}e^2_{-2}e^1_{-1}f^1$$
  we can conclude from the second row that $e^1_{-3}e^2_{-1}e^1_{-1}f^1 \in \sspan_{\mathbb C} B_d + C_2.$
 It is easy to see we can continue using this trick for higher $n$.
  \paragraph{Case 2: Exactly one of $i_1, i_2, i_3$ equals $1$} \mbox{}\\
  Without loss of generality, we can take the monomial of
  the form $e_{-n}^1 e^1_{-1}x^{i_2}_{-1}x^{i_3}$ or $e_{-n}^1 f^1_{-1}x^{i_2}_{-1}x^{i_3}$
  for $i_2, i_3 \neq 1$.
  There is also a question of whether or not $i_2$ equals $i_3$.
  Using automorphisms, we can reduce to these four subcases:
  \paragraph{Subcase 2.1: $e_{-n}^1e^1_{-1}e^2_{-1}e^3$} \mbox{}\\
  We use a similar idea as in \textbf{Case 1}:
  \begin{align*}
    e^{13}_{-2}e^{12} &= e^1_{-2}e^3_{-1}e^1_{-1}e^2_{-1} \\
    e^{13}_{-3}e^{12} &= (e^1_{-3}e^3_{-1} + e^1_{-2}e^3_{-2})e^1_{-1}e^2_{-1}.
  \end{align*}
  The first row shows that $e^1_{-2}e^3_{-1}e^1_{-1}e^2_{-1} \in C_2$,
  and because of
  $$h^{33}_0  e^1_{-2}e^3_{-1}e^1_{-1}e^2_{-1} = e^1_{-2}e^3_{-2}e^1_{-1}e^2_{-1} \in
  C_2$$
  the second row shows that $e^1_{-3}e^3_{-1}e^1_{-1}e^2_{-1} \in C_2$.
  We can continue using the same trick for higher $n$.
  \paragraph{Subcase 2.2: $e_{-n}^1f^1_{-1}e^2_{-1}e^3$} \mbox{}\\
  Looking at $e^{23}_{-2}(e^1_{-1}f^1), e^{12}_{-2}(e^3_{-1}f^1), e^{13}_{-2}(e^2 f^1)$ we can conclude that
  \begin{equation} \label{elem}
  e^1_{-2}f^1_{-1}e^2_{-1}e^3, e^1_{-1}f^1_{-1}e^2_{-2}e^3 \in C_2 + \mathcal L^{2+} \subseteq C_2 + \sspan_{\mathbb C} B_d.  
  \end{equation}
  Now if we calculate
  $$e^{12}_{-3}(e^{3}_{-1}f^1) = (e^1_{-1}e^2_{-3} + e^1_{-2}e^2_{-2} + e^1_{-3}e^2_{-1})e^{3}_{-1}f^1 + u, u \in \mathcal L^{2+}, $$
  and use $h^{22}_0$ on the relation \eqref{elem}, we get
  $e^1_{-3}f^1_{-1}e^2_{-1}e^3 \in C_2 + \sspan_{\mathbb c} B_d$.
  We can use the same strategy for higher $n$.
  \paragraph{Subcase 2.3: $e_{-n}^1e^1_{-1}e^2_{-1}f^2$} \mbox{}\\
  From the proof of the Proposition \ref{propd2} we know that
  $e^1_{-n}e^1 \in C_2$ for all $n \in \mathbb N$ except
  $n = 2, 4$. Because of
  $$h^{22}_{-1}(e^1_{-n}e^1) = e_{-n}^1e^1_{-1}e^2_{-1}f^2$$
  we only need to check for these $n$.
  For $n = 2$ we have
  $$e^{12}_{-2}(e^1_{-1}f^2) = e^1_{-2}e^2_{-1}e^1_{-1}f^2 + u, u \in \mathcal L^{2+}.$$
  For $n = 4$ we need to calculate:
    \begin{align*}
    H^{11}_0(e^1_{-2}e^1e^2f^2) = 3e^1_{-4}e^1e^2f^2 + 2e^1_{-2}e^1_{-3}e^2f^2 \\
    h^{11}_0(e^1_{-3}e^1e^2f^2) = 3e^1_{-4}e^1e^2f^2 + e^1_{-3}e^1_{-2}e^2f^2.
  \end{align*}
  \paragraph{Subcase 2.4: $e_{-n}^1f^1_{-1}e^2_{-1}f^2$} \mbox{}\\
  From the proof of the Proposition \ref{propd2} we know that
  $$e^1_{-n}f^1 \in C_2, \forall n \in \mathbb N, n \neq 1,2,3,4,5,7. $$
  Because of $e_{-n}^1f^1_{-1}e^2_{-1}f^2 = h^{22}_{-1}(e_{-n}f^1)$ we can
  reduce to cases $n = 2,3,4,5,7$.
  For the $n = 2, 4$ cases we can use the \textbf{subcase 2.3}
  together with the fact that for $V_1 \in \mathfrak{sp}(2d)$:
  $$V_1(e_{-n}^1e^1_{-1}e^2_{-1}f^2) = f_{-n}^1e^1_{-1}e^2_{-1}f^2 + e_{-n}^1f^1_{-1}e^2_{-1}f^2 \stackrel{\eqref{bmn2}}{\equiv} 2 e_{-n}f^1_{-1}e^2_{-1}f^2 \mod C_2$$
  For the $n = 3,5,7$ we can use the relation \eqref{efef}
  to show that, up to scalars, $e_{-n}^1f^1_{-1}e^2_{-1}f^2$ is equivalent
  to $(\overline{h^{11}})^{\frac{n+1}{2}} \cdot \overline{h^{22}}$
  for odd $n$.
  We can use the relations \eqref{bmn1} and \eqref{bmn2} (similar to
  the proof of Corollary \ref{cor}) to show they are in
  $C_2 + \mathcal L^{2+}$.
  \paragraph{Case 3: None of $i_1, i_2, i_3$ equal $1$}\mbox{}  \\
  In the case all $i_1, i_2, i_3$ are different, by use of the automorphisms
  we can see that it's enough to consider the monomials $e^1_{-n}e^2_{-1}e^3_{-1}e^4_{-1}$.
  If we look at $e^{ij}_{-2}e^{kl}$ for different choices of $i,j,k,l \in \{1,2,3,4\}$,
  we get $e^1_{-2}e^2_{-1}e^3_{-1}e^4_{-1} \in C_2{SF(d)^+}.$
  We can proceed using the relation
  $$h^{11}_0 (e^1_{-n}e^2_{-1}e^3_{-1}e^4_{-1}) =  n e^1_{-n-1}e^2_{-1}e^3_{-1}e^4_{-1}.$$

  If some of $i_1, i_2, i_3$ take the same value, we get the monomial of the form
  $e^1_{-n}e^i_{-1}f^i_{-1}e^j$ or $e^1_{-n}e^i_{-1}f^i_{-1}f^j$ for $i \neq j$.
  Now we can use the permutation automorphism which exchanges $1$ and $i$
  to put ourselves in one of the former cases.

\newpage
\section{Consequences}

\subsection{Dimension of the space of one-point functions on $SF(d)^+$}
Following \cite{AN}, denote by $\mathcal C(V)$ a vector space of
one-point functions on a VOA $V$.
In \cite{AN} (Theorem 6.3.2), the authors have constructed $2^{2d-1} + 3$ linearly independent pseudo-trace functions, therefore showing that
$$\dim_{\mathbb C} \mathcal C(SF(d)^+) \geq 2^{2d-1} + 3$$
holds for general $d$.
(We won't give the definitions of one-point functions or pseudo-trace
functions here, instead we refer the reader to \cite{Miy} and \cite{AN}.)

They also showed that the equality holds in the case of $d = 1$ (using
Abe's calculation of Zhu's algebra of $SF(1)^+$ from \cite{Abe}), and
conjectured it for general $d$.
Now, because of the Theorem \ref{uvod1}, we can use the same method to show that their conjecture holds.
That method depends on calculating the dimension of the space
of symmetric linear functions on the Zhu's algebra.

We recall the notion: let $A$ be a finite-dimensional associative
$\mathbb C$--algebra. A linear function $\varphi : A \to \mathbb C$
is called a symmetric linear function if
$$\varphi(ab) = \varphi(ba), \forall a, b \in A.$$
Following \cite{AN} we denote the space of those functions by
$S^A$, and for a VOA $V$, we put $S^V := S^{A(V)}$.
The main result connecting $S^V$ to $\mathcal C(V)$ is \cite[Theorem 3.3.6]{AN}:
\begin{thm}[\cite{AN}]
  Let $V$ be a $C_2$-cofinite VOA. Suppose that any simple $V$-module
  is infinite-dimensional.
  Then $\dim_{\mathbb C} \mathcal C(V) \leq \dim_{\mathbb C} S^V.$
\end{thm}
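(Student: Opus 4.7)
The plan is to construct an injective linear map $\Phi : \mathcal C(V) \to S^V$ and read off the desired inequality. Given a one-point function $S(\,\cdot\,, \tau)$, the natural candidate for $\Phi(S)$ is the linear functional on $V$ obtained by extracting a distinguished coefficient of the asymptotic expansion of $S(v,\tau)$ as $q = e^{2\pi i \tau} \to 0$. Because of the Zhu-type recursion relations built into the definition of a one-point function (coming from inserting $o(\omega)$-weighted vertex operators and expanding via Eisenstein series), this coefficient automatically vanishes on vectors of the form $a \circ b$, so $\Phi(S)$ descends to a linear functional $\varphi_S$ on $A(V) = V/O(V)$.

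First I would verify that $\varphi_S$ lies in $S^V$, i.e.\ that $\varphi_S([a] \ast [b]) = \varphi_S([b] \ast [a])$ for every $a, b \in V$. This should follow from the commutator axiom of one-point functions (the relation expressing $S(u[0]v,\tau)$ in terms of zero-mode insertions), since the class of $[a]\ast[b] - [b]\ast[a]$ in $A(V)$ is precisely the image of such a commutator; linearity of $\Phi$ is obvious from the construction. Thus $\Phi$ is a well-defined linear map $\mathcal C(V) \to S^V$.

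Next I would prove $\Phi$ injective. Since $V$ is $C_2$-cofinite, by Miyamoto's modular invariance theorem each coordinate $S(v,\tau)$ satisfies a finite-order linear ODE in $q$ with coefficients holomorphic at $q=0$, so $S$ is uniquely determined by the finite collection of leading data of its generalized power series solutions (including possible logarithmic layers in $\log q$). Under the hypothesis that every simple $V$-module is infinite-dimensional, every such leading datum is captured by the extracted coefficient $\varphi_S$: no simple module contributes a $q$-independent trace that would escape the prescription. Consequently, $\varphi_S = 0$ forces $S \equiv 0$.

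The main obstacle will be Step 3, the injectivity. It demands a careful analysis of the possibly logarithmic $q$-expansions of one-point functions, and one must show that the chosen coefficient sees every logarithmic layer of every solution branch of the Miyamoto ODE. The hypothesis that all simple modules are infinite-dimensional is precisely what rules out the obstruction: a finite-dimensional simple module would yield a constant (in $\tau$) one-point function whose trace at level $0$ is not recoverable from $\varphi_S$, breaking the argument; excluding this case is exactly how the infinite-dimensionality assumption enters.
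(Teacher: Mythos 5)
This theorem is quoted in the paper from \cite{AN} (Theorem 3.3.6) without proof, so there is no in-paper argument to compare against; your proposal has to stand on its own. Its overall shape --- extract leading coefficients of the $q$-expansion of a one-point function to obtain a symmetric linear functional on $A(V)$, then prove injectivity --- is indeed the right skeleton, but as written there are genuine gaps. First, the map $\Phi$ is never actually defined: for a $C_2$-cofinite logarithmic $V$ the expansion of $S(v,\tau)$ has the form $\sum_i q^{\lambda_i}\sum_{n\geq 0}\sum_k C^i_{n,k}(v)\,q^n(\log q)^k$ with several distinct leading exponents $\lambda_i$ and several logarithmic layers, so there is no single ``distinguished coefficient''; one must specify a finite family of functionals (and in particular decide which log-layer to read off), and that choice is exactly what makes both the descent to $A(V)$ and the injectivity delicate. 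Second, the descent to $A(V)$ is not automatic: the defining recursions of a one-point function are phrased in the square-bracket (torus) modes $a[n]b$, whereas $O(V)$ is spanned by the elements $a\circ b$ built from the round-bracket modes; passing between the two requires the comparison of the two vertex-operator-algebra structures on $V$ and the isomorphism of the corresponding Zhu algebras, which your sketch does not address.

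The decisive gap is your Step 3, which you yourself flag as the main obstacle and then do not carry out. The assertion that ``every such leading datum is captured by $\varphi_S$'' and hence $\varphi_S=0$ forces $S\equiv 0$ is precisely the content of the theorem: one must show that all higher coefficients $C^i_{n,k}$, including the lower logarithmic layers, are determined recursively from the leading ones, and that is where the substance of the argument in \cite{AN} lies. Moreover, your account of where the hypothesis enters is internally inconsistent: you claim a finite-dimensional simple module would produce a trace ``not recoverable from $\varphi_S$'', yet by your own construction the trace over the lowest (here, the only) graded piece of such a module is exactly the extracted leading coefficient, so this cannot be the obstruction. In short, the proposal names the right objects but leaves unproven both the well-definedness of the coefficient functionals on $A(V)$ and, above all, the injectivity --- i.e., the actual content of the theorem.
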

Now, all is set for the following Proposition.
\begin{prop}
  For all natural $d$, we have
  $$\dim_{\mathbb C}\mathcal C (SF(d)^+) = 2^{2d-1} + 3.$$
\end{prop}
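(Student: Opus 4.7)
The plan is as follows. Since Arike and Nagatomo have already exhibited $2^{2d-1} + 3$ linearly independent pseudo-trace functions on $SF(d)^+$, giving the lower bound, it suffices to prove the matching upper bound. By the Arike--Nagatomo theorem quoted above,
\[
\dim_{\mathbb C} \mathcal C(SF(d)^+) \leq \dim_{\mathbb C} S^{SF(d)^+},
\]
provided $SF(d)^+$ is $C_2$-cofinite (which it is, by \cite{Abe}) and every simple $SF(d)^+$-module is infinite-dimensional. The latter hypothesis is immediate from Theorem \ref{abeglavni}: the simple modules are $SF(d)^\pm$ and $SF(d)_\theta^\pm$, each admissible with an unbounded $L(0)$-grading. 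Hence the proof reduces to the purely algebraic computation of $\dim_{\mathbb C} S^{A(SF(d)^+)}$.

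The key input is the main Theorem \ref{uvod1}, which gives
\[
A(SF(d)^+) \simeq \mathcal A_d = M_{2d}(\mathbb C) \oplus M_{2d}(\mathbb C) \oplus \Lambda^{\text{even}}(V_{2d}) \oplus \mathbb C.
\]
Now $S^A$ is by definition the annihilator of $[A,A]$ in $A^*$, so $S^A \simeq (A/[A,A])^*$; in particular $S^{A \oplus B} \simeq S^A \oplus S^B$, and it suffices to evaluate the four summands. For each matrix factor $M_{2d}(\mathbb C)$ the commutator subspace equals $\mathfrak{sl}_{2d}(\mathbb C)$, which has codimension one, so $S^{M_{2d}(\mathbb C)}$ is one-dimensional (spanned by the ordinary trace). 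The factor $\Lambda^{\text{even}}(V_{2d})$ is commutative, since in an exterior algebra products of even-degree elements commute; thus every linear functional is symmetric, contributing $\dim \Lambda^{\text{even}}(V_{2d}) = 2^{2d-1}$. The final summand $\mathbb C$ contributes one dimension. Summing,
\[
\dim_{\mathbb C} S^{SF(d)^+} = 1 + 1 + 2^{2d-1} + 1 = 2^{2d-1} + 3,
\]
and combining with the Arike--Nagatomo lower bound yields the claimed equality.

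There is no genuine obstacle at this final stage: the heavy lifting has been done by Theorem \ref{uvod1}, which supplies the explicit block decomposition of $A(SF(d)^+)$. Once that is in hand, the computation of $S^{SF(d)^+}$ amounts only to the standard identification of symmetric linear functionals on matrix algebras with the trace, together with the commutativity of $\Lambda^{\text{even}}(V_{2d})$. The only hypothesis requiring a separate remark is the infinite-dimensionality of simple modules, which is immediate from the classification recalled in subsection \ref{subsec:rep}.
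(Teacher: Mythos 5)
Your proof is correct and follows essentially the same route as the paper: the Arike--Nagatomo lower bound plus their inequality $\dim_{\mathbb C}\mathcal C(V)\leq\dim_{\mathbb C}S^V$, then a computation of $\dim_{\mathbb C}S^{A(SF(d)^+)}$ block by block using Theorem \ref{uvod1}, with the two matrix factors each contributing $1$ and the commutative factors contributing $2^{2d-1}+1$. Your explicit verification of the infinite-dimensionality hypothesis and the identification $S^A\simeq(A/[A,A])^*$ are minor elaborations of what the paper leaves implicit.
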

\begin{proof}
  Using aforementioned results from \cite{AN}, it is enough to show that
  $\dim_{\mathbb C} S^{SF(d)^+} = 2^{2d-1} + 3$.
  By Theorem \ref{uvod1} we know that
  $$A(SF(d)^+) \simeq \Lambda^{\text{even}}(V_{2d}) \oplus M_{2d}(\mathbb C) \oplus M_{2d}(\mathbb C) \oplus \mathbb C.$$
  We need to know the dimension of the space of symmetric linear functions
  on each summand. 
  For the commutative algebra $\Lambda^{\text{even}}(V_{2d})$, each
  linear function is symmetric, giving us
  $$\dim_{\mathbb C}S^{\Lambda^{\text{even}}(V_{2d})} = \dim_{\mathbb C} \Lambda^{\text{even}}(V_{2d}) = 2^{2d-1}.$$
  For matrix algebras, we have
  $$\dim_{\mathbb C} S^{M_n(\mathbb C)} = 1, \ \forall n \in \mathbb N.$$
  The result follows.
\end{proof}

\subsection{Further properties of $A(SF(d)^+)$
  and $\mathcal P(SF(d)^+)$}

First, let's record some easy consequences of the Theorem \ref{uvod1}:

\begin{cor} \label{fcor}
  \begin{enumerate}
  \item Image of $B_d$ in $\mathcal P(SF(d)^+)$ is actually a basis for $\mathcal P(SF(d)^+)$.
  \item We have $\gr A(SF(d)^+) \simeq \mathcal P(SF(d)^+).$
  \item Minimal polynomial of $[\omega] \in A(SF(d)^+)$ is
    $$m_d(x) = x^{d+1} (x-1) \left(x + \frac{d}{8} \right) \left(x + \frac{d}{8}- \frac{1}{2} \right).$$
  \item The center of $A(SF(d)^+)$ is isomorphic to
    $$\Lambda^{\text{even}}(V_{2d}) \oplus \mathbb C \oplus \mathbb C \oplus \mathbb C. $$
  \end{enumerate}
\end{cor}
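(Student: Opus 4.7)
The plan is to deduce all four statements from the isomorphism $A(SF(d)^+) \simeq \mathcal A_d$ established in Theorem \ref{uvod1}, the spanning set $B_d$ produced by Theorem \ref{Bd}, and the explicit actions of $o(\omega)$ on top components recorded in Subsection \ref{subsec:rep}.

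First I would handle parts (1) and (2), which are essentially dimension counts. For (1), the set $B_d$ has cardinality $n_d$ by construction and its image spans $\mathcal P(SF(d)^+)$ by Theorem \ref{Bd}, while $\dim_{\mathbb C} \mathcal P(SF(d)^+) = n_d$ by Theorem \ref{uvod1}; hence the image of $B_d$ is a basis. For (2), the natural epimorphism $\gr A(V) \twoheadrightarrow \mathcal P(V)$ from \eqref{eq:epi} becomes an isomorphism as soon as $\dim A(V) = \dim \mathcal P(V)$, and Theorem \ref{uvod1} supplies exactly this equality.

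Part (3) is the main content. I would transport the question through $\pi$ to $\mathcal A_d$, where the minimal polynomial of $\pi([\omega])$ is the least common multiple of its minimal polynomials on each summand. On the three semisimple summands $o(\omega)$ acts as a scalar equal to the conformal weight of the corresponding top component from Subsection \ref{subsec:rep}, giving factors $(x-1)$, $(x + d/8)$, and $(x + d/8 - 1/2)$. On the $\Lambda^{\text{even}}(V_{2d})$ summand, since $\omega = \sum_i h^{ii}$ and $o(h^{ii}) = e^i_0 f^i_0$, the element $\pi([\omega])$ corresponds to left multiplication by $\sum_{i=1}^d e^i_0 f^i_0$; because the $e^i_0 f^i_0$ are pairwise commuting even elements each squaring to zero, one computes $(\sum_i e^i_0 f^i_0)^d = d! \, e^1_0 f^1_0 \cdots e^d_0 f^d_0 \neq 0$ and $(\sum_i e^i_0 f^i_0)^{d+1} = 0$, contributing the factor $x^{d+1}$. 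The four factors are pairwise coprime in $\mathbb C[x]$, so their product is exactly $m_d(x)$.

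Part (4) then reduces to the standard fact that the center of a finite direct sum of algebras is the direct sum of centers: each $M_{2d}(\mathbb C)$ contributes $\mathbb C$, the commutative algebra $\Lambda^{\text{even}}(V_{2d})$ (even-graded elements of a superalgebra commute among themselves) is its own center, and the last $\mathbb C$ summand contributes itself, yielding the claimed decomposition. The only slightly subtle step in the whole argument is confirming that the volume $e^1_0 f^1_0 \cdots e^d_0 f^d_0$ is nonzero, so that the nilpotency index of $o(\omega)$ on $\Lambda^{\text{even}}(V_{2d})$ is exactly $d+1$; this is immediate in the exterior algebra on $2d$ generators but worth stating explicitly.
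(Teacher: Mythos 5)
Your argument follows the paper's proof in substance: the paper disposes of all four parts in four sentences by citing Theorem \ref{uvod1}, Theorem \ref{Bd}, the epimorphism \eqref{eq:epi}, and the discussion in subsection \ref{subsec:epi}, and your proposal simply fills in the details that those citations are meant to carry — the dimension counts for (1) and (2), the lcm of minimal polynomials over the four summands of $\mathcal A_d$ for (3), and the center of a direct sum for (4). The computation $(\sum_i e^i_0 f^i_0)^d = d!\, e^1_0 f^1_0 \cdots e^d_0 f^d_0 \neq 0$, $(\sum_i e^i_0 f^i_0)^{d+1} = 0$ is exactly what pins down the exponent $d+1$, and your identification of the three scalar eigenvalues with the conformal weights $1$, $-d/8$, $-d/8+1/2$ matches subsection \ref{subsec:epi}. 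One caveat, which you inherit from the paper rather than introduce: the assertion that the four factors $x^{d+1}$, $x-1$, $x+d/8$, $x+d/8-\frac12$ are pairwise coprime fails for $d=4$, since there $d/8-\frac12=0$ and the last factor coincides with $x$; in that case the minimal polynomial is the least common multiple $x^{5}(x-1)(x+\frac12)$ rather than the stated product $m_4(x)$, which has an extraneous factor of $x$. For every other $d$ the four roots $0$, $1$, $-d/8$, $-d/8+\frac12$ are distinct and your argument is complete as written.
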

\begin{proof}
  Part 1 follows from Theorem \ref{Bd} and the equality of dimensions
  of $A(SF(d)^+)$ and $\mathcal P(SF(d)^+)$.
  Equality of dimensions also gives us that the epimorphism \eqref{eq:epi}
  is actually an isomorphism in this case, solving part 2.
  Part 3 is a direct consequence of $A(SF(d)^+) \simeq \mathcal A_d$
  and the discussion in the subsection \ref{subsec:epi}.
  Part 4 is a direct consequence of the Theorem \ref{uvod1}.
\end{proof}

Let $s_d$ be a degree of nilpotency of $\overline{\omega} \in \mathcal P(SF(d)^+)$. Parts 2 and 3 of the Corollary combine to give us that
$s_d \leq \deg m_d = d+4$. Actually, we can do better:

\begin{prop}
   We have:
  $$ s_d =
  \begin{cases}
    5, & \quad d \leq 4 \\
    d + 1, & \quad d \geq 5.
  \end{cases} $$
\end{prop}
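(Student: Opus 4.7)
Let $y_i := \overline{h^{ii}} \in \mathcal P(SF(d)^+)$, so that $\overline{\omega} = \sum_{i=1}^d y_i$. The plan is to establish the equivalent formula $s_d = \max(5, d+1)$ by matching an upper bound coming from the two-variable identities of Lemma \ref{lemlem} and Corollary \ref{cor} with a lower bound obtained from the algebra epimorphism $\widehat{\rho^+} \colon A(V) \to \Lambda^{\text{even}}(V_{2d})$, together with an explicit computation in the small cases $d \in \{1, 2, 3\}$.

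For the upper bound I first collect the needed consequences of the standing identities. Iterating \eqref{efef} and using \eqref{e9} and Corollary \ref{cor} gives $y_i^4 = 360\,\overline{e^1_{-7} f^1}$ and $y_i^5 = 0$. The relations of Lemma \ref{lemlem} yield $y_i^3 = 6 y_i^2 y_j + 3 y_i y_j^2$, hence $y_i^3 y_j = -3 y_i^2 y_j^2$, and combined with $y_i^4 = y_j^4$ this forces $y_i^4 = -15 y_i^2 y_j^2$. Thus $Y := y_i^2 y_j^2$ does not depend on the choice of distinct pair, and $Y y_k = -\tfrac{1}{15} y_i^4 y_k = 0$ for every $k$. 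The key intermediate identity is
\[
  y_i^2 y_j y_k y_l = 0 \qquad \text{for pairwise distinct } i, j, k, l,
\]
which follows by multiplying $(y_i - y_j)^3 = 0$ and $(y_i + y_j)^3 = 12(y_i + y_j) y_i y_j$ by $y_k y_l$, using $y_i^3 y_k y_l = -3 Y y_l = 0$ to kill the cubic terms, and solving the resulting linear system. Multiplying by further distinct generators then gives $y_i^2 y_{j_1} \cdots y_{j_s} = 0$ for all $s \geq 3$. A case analysis on the number of distinct indices and the maximum exponent then shows that every monomial $y_1^{k_1} \cdots y_d^{k_d}$ of total degree exceeding $\max(4, d)$ vanishes, yielding $\overline{\omega}^k = 0$ and $s_d \leq \max(5, d+1)$.

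For the lower bound, the surjection $\widehat{\rho^+}$ from subsection \ref{subsec:epi} preserves the natural filtrations --- with the filtration on $\Lambda^{\text{even}}(V_{2d})$ induced by its standard grading --- so its associated graded is a graded algebra map $\mathcal P(V) \to \Lambda^{\text{even}}(V_{2d})$ sending $y_i \mapsto e^i_0 f^i_0$. Since $(e^i_0 f^i_0)^2 = 0$, the multinomial expansion collapses to $\bigl( \sum_i e^i_0 f^i_0 \bigr)^d = d!\,\prod_{i=1}^d e^i_0 f^i_0 \neq 0$, so $\overline{\omega}^d \neq 0$ in $\mathcal P(V)$, giving $s_d \geq d+1$; this already settles $d \geq 4$. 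For $d \in \{1, 2, 3\}$ I separately need $\overline{\omega}^4 \neq 0$: expanding $(\sum y_i)^4$ with the relations above yields $\overline{\omega}^4 = c_d\,\overline{e^1_{-7} f^1}$ with $c_d \neq 0$, and by Corollary \ref{fcor}(1), $\overline{e^1_{-7} f^1} \in B_d^2$ is a basis vector of $\mathcal P(V)$ and hence nonzero.

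The main obstacle is the passage from the two-variable identities of Lemma \ref{lemlem} to the four-index vanishing $y_i^2 y_j y_k y_l = 0$: the crucial intermediate step $Yy_k = 0$ rests on Corollary \ref{cor}, after which the displayed identity follows from a few lines of linear algebra and the remainder of the case analysis is routine. The filtration compatibility of $\widehat{\rho^+}$ can be read off directly from the formulas in subsection \ref{subsec:rep}, since $o$ of any weight-$n$ strong generator acts on $\widehat{SF(d)^+}_0$ as multiplication by an element of $\Lambda^{\leq n}(V_{2d})$, and the claim propagates multiplicatively.
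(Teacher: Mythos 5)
Your proof is correct, but it reaches the result by a genuinely different route from the paper's in both directions. For the upper bound $s_d \le \max(5,d+1)$ the paper simply observes that $\mathcal P(SF(d)^+)$ is graded by conformal weight, that the basis $B_d$ (Corollary \ref{fcor}, part 1) has top weight $8$ for $d\le 4$ and $2d$ for $d\ge 5$, and that $\overline{\omega}^{\,k}$ has weight $2k$ --- a one-line argument. You instead derive the vanishing purely from the relations of Lemma \ref{lemlem} and Corollary \ref{cor} by a case analysis on monomials in the $y_i$; your intermediate identities ($y_i^3=6y_i^2y_j+3y_iy_j^2$, $y_i^3y_j=-3y_i^2y_j^2$, $y_i^4=-15y_i^2y_j^2$, $Yy_k=0$, $y_i^2y_jy_ky_l=0$) all check out, and they in fact supply, in sharper form, exactly the cross-term vanishing that the paper only sketches for its $d\ge 5$ lower bound. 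For the lower bound the paper computes $\overline{\omega}^{\,4}$ explicitly (by computer) for $d\le 4$ and shows $\overline{\omega}^{\,d}=d!\,\overline{e^1_{-1}f^1_{-1}\cdots e^d_{-1}f^d}$ for $d\ge5$, a nonzero multiple of a basis element; you instead push $[\omega]^d$ through $\widehat{\rho^+}$ into $\Lambda^{\text{even}}(V_{2d})$, which handles all $d\ge4$ uniformly and confines the explicit computation to $d\le 3$ (your coefficients agree with the paper's values: $c_1=360$, $c_2=1152$, $c_3=2664$). What your approach buys is independence from the computer check for $d=4$ and a structural explanation of why $\overline{\omega}^{\,d}\neq 0$; what it costs is the extra bookkeeping with filtrations described next.

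The one place you should tighten the argument is the filtration compatibility $\widehat{\rho^+}(F^nA(SF(d)^+))\subseteq\Lambda^{\le n}$. Saying the claim ``propagates multiplicatively'' tacitly assumes that $F^nA(SF(d)^+)$ is spanned by $*$-products of generator classes of total conformal weight $\le n$, which is true but not immediate from the definition $F^nA(V)=\bigl(\bigoplus_{k\le n}V_k\bigr)\oplus O(V)$. A cleaner justification argues directly on modes: for a monomial $a=x^{i_1}_{-n_1}\cdots x^{i_{2k}}_{-n_{2k}}\mathbf 1$ of weight $n=\sum n_j$, the component of $o(a)=a_{n-1}$ preserving the degree-zero subspace $\widehat{SF(d)}^+_0$ consists of the all-zero-mode term plus contractions, so it is multiplication by an element of $\Lambda^{\le 2k}\subseteq\Lambda^{\le n}$. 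You also implicitly use $\gr A(SF(d)^+)\simeq\mathcal P(SF(d)^+)$ (Corollary \ref{fcor}, part 2) to turn $\gr\widehat{\rho^+}$ into a map out of $\mathcal P(SF(d)^+)$ rather than out of $\gr A(SF(d)^+)$; that isomorphism is available at this point in the paper, but it must be cited, since without it the canonical surjection $\gr A(V)\to\mathcal P(V)$ points the wrong way for your purposes.
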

\begin{proof}
  Notice that the element of maximal conformal weight in $B_d$
   is $e^1_{-7}f^1$ for $d \leq 4$, and $e^1_{-1}f^1_{-1}e^2_{-1}f^2_{-1}
  \ldots e^{d}_{-1}f^d$ for $d \geq 5$.
  Also, that maximal conformal weight is exactly $2(s_d - 1)$, so it
  follows that
  $$(\overline{\omega})^{s_d} = \overline{0}$$
  in $\mathcal P(SF(d)^+)$.
  It remains to show that $(\overline{\omega})^{s_d - 1}$ isn't equal to zero
  in $\mathcal P(SF(d)^+)$.
  For $d \leq 4$, we can check by the computer (using relations proven
  in the subsection \ref{subsec:lem}) that we have:
  \begin{align*}
    (\overline{\omega})^4 &= (\overline{h^{11}})^4  \text{ in } \mathcal P(SF(1)^+) \\
    (\overline{\omega})^4 &= \frac{16}{5}(\overline{h^{11}})^4  \text{ in } \mathcal P(SF(2)^+) \\
    (\overline{\omega})^4 &= \frac{37}{5}(\overline{h^{11}})^4  \text{ in } \mathcal P(SF(3)^+) \\
    (\overline{\omega})^4 &= \frac{72}{5}(\overline{h^{11}})^4 + 24\overline{e^1_{-1}f^1_{-1} \ldots e^4_{-1}f^4_{-1}} \text{ in } \mathcal P(SF(4)^+).
  \end{align*}
  By relation \eqref{efef} we get that
  $$\overline{h^{11}}^4 = 360 \overline{e^1_{-7}f^1}.$$
  Now we see that in these cases we can write $(\overline{\omega})^4$
  as a nonzero linear combination of basis elements (by the part 1
  of the Corollary \ref{fcor}).
  
  For $d \geq 5$, one can show that
  $$(\overline{\omega})^d = d! \overline{e^1_{-1}f^1_{-1} \ldots e^d_{-1}f^d} \text{ in } \mathcal P(SF(d)^+).$$
  The strategy is to write:
  $$(\overline{\omega})^d = \sum_{k_1 + k_2 + \ldots + k_m} \frac{d!}{k_1! k_2! \ldots k_m!} \prod_{i = 1}^m (\overline{h^{ii}})^{k_i},$$
  and then (using relations from Lemma \ref{lemlem}) show
  that we have
  $$\prod_{i = 1}^m (\overline{h^{ii}})^{k_i} = \overline{0}$$
  for all choices of $k_i$ except the $k_1 = k_2 = \ldots = k_d = 1.$
\end{proof}

We see that for all $d > 1$ we have $s_d < \deg m_d$.
We can explain this for $d = 2$. Take a $\mathfrak{sp}(2d)$-invariant
element of conformal weight $4$:
$$J^4 = (e^1_{-3}f^1 - f^1_{-3}e^1) + (e^2_{-3}f^2 - f^2_{-3}e^2). $$
One can show that in $A(SF(2)^+)$ we have
\begin{equation} \label{j4}
[J^4] = -\frac{144}{5} [\omega]^5 + 24 [\omega]^4 + \frac{29}{5}[\omega]^3.  \end{equation}
It follows that we can write $[\omega]^5$ as a linear combination
of elements of lower degree, and by part 2 of the Corollary \ref{fcor}
we must have $(\overline{\omega})^5 = \overline{0}$.

In \cite[Section 3]{CL}, the authors showed that the VOA $SF(d)^{\mathfrak{sp}(2d)}$ is a $\mathcal W$-algebra of type $\mathcal W(2,4,\ldots, 2d)$, generated by a primary field of conformal weight $4$.   Later, it was proved in \cite{KL} that  $SF(d)^{\mathfrak{sp}(2d)}$  is isomorphic to the simple principal $\mathcal W$--algebra $\mathcal W_k(sp(2d), f_{prin})$ for $k = - d-1/2$.
Specially, $SF(2)^{\mathfrak{sp}(4)}$
is strongly generated by $\omega$ and $J^4$, and it follows that
$A(SF(2)^{\mathfrak{sp}(4)})$ is generated by $[\omega]$ and $[J^4]$.
We have the induced algebra homomorphism
$$A(SF(2)^{\mathfrak{sp}(4)}) \to A(SF(2)^+),$$
and  it is easy to see that the image of this homomorphism is the invariant subalgebra
$A(SF(2)^+)^{\mathfrak{sp}(4)}$. 
Equation \eqref{j4} says that $A(SF(2)^+)^{\mathfrak{sp}(4)}$ is exactly the subalgebra of $A(SF(2)^+)$ generated by $[\omega]$.

We can show that this also happens to $\mathfrak{sp}(2d)$-invariant
elements in $A(SF(d)^+)$ for all $d$.

\begin{prop} \label{propinv}
  Invariant subalgebra $A(SF(d)^+)^{\mathfrak{sp}(2d)}$ is generated by
  $[\omega]$, and thus isomorphic to $\mathbb C[x]/(m_d(x))$.
\end{prop}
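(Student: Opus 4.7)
The plan is to match dimensions on both sides of the inclusion of the subalgebra generated by $[\omega]$ into $A(SF(d)^+)^{\mathfrak{sp}(2d)}$, using the explicit description of Zhu's algebra from Theorem \ref{uvod1}.

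First, note that $\omega = \sum_{i=1}^d e^i_{-1} f^i$ is $\mathfrak{sp}(2d)$-invariant because the symplectic form on $\mathfrak h$ is by definition preserved by $Sp(2d)$; hence $[\omega]$ lies in $A(SF(d)^+)^{\mathfrak{sp}(2d)}$, and by Corollary \ref{fcor}(3) the subalgebra generated by $[\omega]$ is isomorphic to $\mathbb{C}[x]/(m_d(x))$, of dimension $d+4$. This gives the lower bound on the invariant subalgebra.

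Next I would compute $\dim A(SF(d)^+)^{\mathfrak{sp}(2d)}$ summand by summand through the isomorphism $A(SF(d)^+) \simeq M_{2d}(\mathbb{C}) \oplus M_{2d}(\mathbb{C}) \oplus \Lambda^{\text{even}}(V_{2d}) \oplus \mathbb{C}$. Each summand is built from an $\mathfrak{sp}(2d)$-equivariant representation of $A(SF(d)^+)$ (on the standard module $V_{2d}$ for the two matrix summands, on $\Lambda^{\text{even}}(V_{2d})$ for the third, and on the trivial module for the fourth), so the decomposition respects the $\mathfrak{sp}(2d)$-action and the invariants decompose accordingly. Schur's lemma applied to the irreducible module $V_{2d}$ gives $M_{2d}(\mathbb{C})^{\mathfrak{sp}(2d)} = \mathbb{C}\cdot I$ for each matrix summand, and the trivial summand contributes $1$. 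For the exterior algebra, the First Fundamental Theorem of invariant theory for $Sp(2d)$ identifies $\Lambda^{\text{even}}(V_{2d})^{\mathfrak{sp}(2d)}$ with the subalgebra generated by the symplectic bivector $\omega_s = \sum_{i=1}^d e^i \wedge f^i \in \Lambda^2(V_{2d})$; since $\omega_s^{d+1}=0$, this subalgebra has basis $\{1, \omega_s, \ldots, \omega_s^d\}$ and dimension $d+1$. Summing, $\dim A(SF(d)^+)^{\mathfrak{sp}(2d)} = 1 + 1 + (d+1) + 1 = d+4$.

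Matching the two dimensions forces the subalgebra generated by $[\omega]$ to coincide with the whole invariant subalgebra, which is the proposition. As a consistency check one can trace $\pi([\omega])$ through the decomposition: on the two matrix summands it is a nonzero scalar (from the factors $x-1$ and $x + d/8 - 1/2$ of $m_d(x)$), on the $\mathbb{C}$ summand it is the scalar $-d/8$, and on $\Lambda^{\text{even}}(V_{2d})$ it equals $\sum_i o(h^{ii}) = \sum_i e^i_0 f^i_0 = \omega_s$ by the formulas for $\widehat{SF(d)}^+$ in Subsection \ref{subsec:rep}; thus $\pi([\omega])$ manifestly generates the invariants in each summand. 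The main obstacle is the invariant-theoretic input $\dim \Lambda^{\text{even}}(V_{2d})^{\mathfrak{sp}(2d)} = d+1$; the rest of the argument uses only structural results already assembled in the earlier sections.
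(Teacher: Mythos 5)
Your proposal is correct and follows the same overall skeleton as the paper's proof: both sides reduce to showing $\dim A(SF(d)^+)^{\mathfrak{sp}(2d)} = d+4 = \dim \mathbb C[x]/(m_d(x))$ and then computing the invariants summand by summand as $1+1+(d+1)+1$. The differences are in how the individual summands are handled. For the two matrix summands you invoke Schur's lemma on the conjugation action coming from the (irreducible) top components of $SF(d)^-$ and $SF(d)_\theta^-$; the paper instead stays inside $A(SF(d)^+)$, observing that each block has a basis of the form $e_\lambda * [x]$ with $x$ a strong generator, so that the block is a quotient of $\Sym^2(V_{2d}) \oplus \Lambda^2(V_{2d})$ and the invariants come from the one copy of the trivial module inside $\Lambda^2(V_{2d})$. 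For $\Lambda^{\mathrm{even}}(V_{2d})$ you use the first fundamental theorem for $Sp(2d)$ (invariants generated by the symplectic bivector, truncated at $\omega_s^{d+1}$), whereas the paper decomposes each $\Lambda^{2k}(V_{2d})$ into fundamental representations via Procesi and counts one trivial summand in each; these are two packagings of the same representation-theoretic fact, and your version has the bonus of exhibiting $\pi([\omega])$ directly as $\omega_s$. The one step you should make explicit is why the block decomposition and the $\End$-identifications are $\mathfrak{sp}(2d)$-equivariant: your appeal to "equivariant representations" requires checking that the derivation action of $\mathfrak{sp}(2d)$ lifts to each module compatibly with the mode maps $o(\cdot)$ (true, since $SF(d)^\pm$ and $SF(d)_\theta^\pm$ sit inside $SF(d)$ and $SF(d)_\theta$, on which the derivations act), while the paper sidesteps this by noting that the central idempotents cutting out the blocks are polynomials in the invariant element $[\omega]$, so the generalized eigenspace decomposition of $A(SF(d)^+)$ is automatically $\mathfrak{sp}(2d)$-stable. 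With that point filled in, your argument is complete.
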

\begin{proof}
  Let $W$ be a subalgebra of $A(SF(d)^+)$ generated by $[\omega]$.
  Because $\omega$ is invariant under the action of $\mathfrak{sp}(2d)$,
  we have $W \leq A(SF(d)^+)^{\mathfrak{sp}(2d)}$.
  Also, because of the part 3 of Corollary \ref{fcor}, we have
  $$W \simeq \mathbb C[x]/(m_d(x)), \ \dim_{\mathbb C} W = d+4.$$
  We will prove that $\dim_{\mathbb C} A(SF(d)^+)^{\mathfrak{sp}(2d)} = d + 4$,
  and the rest will follow.
  We can write
  $$A(SF(d)^+) = A_{-\frac{d}{8}} \oplus A_0 \oplus A_{-\frac{d}{8} + \frac{1}{2}} \oplus A_{1}, $$
  where
  $$A_{\lambda} = \{ x \in A(SF(d)^+) : ([\omega] - \lambda[\mathbf 1])^N x = 0, \text{ for N large enough } \}.$$
  Theorem \ref{uvod1} says that we have
  $$A_{-\frac{d}{8}} \simeq \mathbb C, \ A_0 \simeq \Lambda^{\text{even}}(V_{2d}), \ A_{-\frac{d}{8} + \frac{1}{2}} \simeq A_1 \simeq M_{2d}(\mathbb C). $$
  We want to show that $A_\lambda$
  are in fact $\mathfrak{sp}(2d)$-submodules.
  By the general theory of associative algebras (for the reference,
  see \cite{Lam}), there
  are central idempotent elements $e_\lambda \in A(SF(d)^+)$
  such that
  $$A_\lambda = A(SF(d)^+) e_\lambda. $$
  It is easy to see that these central idempotent elements are
  polynomials in $[\omega]$ by looking at the decomposition
  $$\mathbb C[x]/(m_d(x)) \simeq \mathbb C[x]/(x^{d+1}) \oplus \mathbb C[x]/(x-1) \oplus \mathbb C[x]/(x + d/8) \oplus \mathbb C[x]/(x + d/8 - 1/2). $$
  It follows that each $A_\lambda$ is a $\mathfrak{sp}(2d)$-submodule.

  It is obvious that $A_{- \frac{d}{8}}$ is a trivial $\mathfrak{sp}(2d)$-module.
  By looking at top components of $SF(d)^-$ and $SF(d)_\theta^-$ one can see
  that both $A_1$ and $A_{-d/8 + 1/2}$ have a linear basis of the form
  $$e_{\lambda} * [x], \ \lambda = 1, -d/8 + 1/2$$
  where $x$ is a strong generator of $SF(d)^+$.
  It is easy to check that the linear span of large generators in $SF(d)^+$ is
  a $\mathfrak{sp}(2d)$-submodule isomorphic to $\Sym^2(V_{2d})$, and that the linear span
  of small generators is a $\mathfrak{sp}(2d)$-submodule isomorphic to $\Lambda^2(V_{2d})$.
  Now, by \cite{FH} we have that $\Sym^2(V_{2d})$ is an irreducible $\mathfrak{sp}(2d)$-module,
  and that $\Lambda^2(V_{2d})$ decomposes as
  $$\Lambda^2(V_{2d}) \simeq U \oplus \mathbb C,$$
  where $U$ is an irreducible $\mathfrak{sp}(2d)$-module (the trivial part corresponds to
  $\mathbb C \omega$).
  Now, we have that
  $$ \dim_{\mathbb C} A_1^{\mathfrak{sp}(2d)} =  \dim_{\mathbb C} A_{-d/8+1/2}^{\mathfrak{sp}(2d)} = 1. $$
  For $A_0$, it is easy to see that it has a linear basis of the form
  $$e_0 * [x], x \in B_d^1,$$
  and that we have
  $$A_0 \simeq \bigoplus_{k = 0}^d \Lambda^{2k}(V_{2d}). $$
   Next we use a Theorem  in \cite[Chapter 11.6.7]{Pro} which says that
  for $0 \leq 2k \leq d$ we have
  $$\Lambda^{2k}(V_{2d}) \simeq \bigoplus_{i = 0}^k V_{2d}^{(i)}.$$
  (where by $V_{2d}^{(0)}$ we denote the trivial representation, and
  by $V_{2d}^{(i)}$ the $i$-th fundamental representation).
  It is a well-known fact that for $d < 2k \leq 2d$ we have
  $$\Lambda^{2k}V_d \simeq \Lambda^{2d - 2k} V_d.$$
  To conclude, for $0 \leq 2k \leq 2d$ we have
  $$\dim_{\mathbb C} (\Lambda^{2k} V_d^{\mathfrak{sp}(2d)}) = 1,$$
  which implies
  $$\dim_{\mathbb C} (A_0^{\mathfrak{sp}(2d)}) = d + 1.$$
  Summing up, we get
  $$\dim_{\mathbb C} ((A(SF(d)^+)^{\mathfrak{sp}(2d)}) = d + 4. $$
\end{proof}

\begin{rem}
Note that for $d >1$, the invariant subalgebra  $(A(SF(d)^+)^{\mathfrak{sp}(2d)}  $  is a proper subalgebra of the center  of $A(SF(d)^+)$. In the case of the  triplet vertex algebra $\mathcal W(p)$ (recall that $SF(1) ^ + = \mathcal W(2)$), the center of  $A(\mathcal W(p))$ coincides with the invariant subalgebra  $(A(\mathcal W(p)) )^{\mathfrak{sl}(2)}) $ (cf. \cite{AM}, \cite{AM11}). In our opinion, the reason for this difference is in the fact that $SF(d)^+$ for $d >1$ contains  a large number of generators of conformal weight $2$ which can contribute to the center of $(A(SF(d)^+)$, while weight two space of $\mathcal W(p)$ is $1$-dimensional.

In the case of higher rank triplet vertex algebra $\mathcal W(p)_Q$ we expect that center of $A(\mathcal W(p)_Q)$ conicides with invariants $A(\mathcal W(p)_Q)^{\mathfrak{g}}$, where $\mathfrak g$ is the simple Lie algebra having root system $Q$.
\end{rem}

\section*{Acknowledgment}
 
 The results of this paper were    reported  by  A.\v C.  at the conference Representation Theory XVI, Dubrovnik, June 23-29.2019.
 
The authors  are  partially supported   by the
QuantiXLie Centre of Excellence, a project cofinanced
by the Croatian Government and European Union
through the European Regional Development Fund - the
Competitiveness and Cohesion Operational Programme
(KK.01.1.1.01.0004).

\Addresses

\end{document}